\numberwithin{equation}{section}
\theoremstyle{definition}
\newtheorem{Def}{Definition}[section]
\theoremstyle{remark}
\newtheorem{Rem}[Def]{Remark}
\theoremstyle{plain}
\newtheorem{Prop}[Def]{Proposition}
\newtheorem{Cor}[Def]{Corollary}
\newtheorem{Thm}[Def]{Theorem}
\newtheorem{Lem}[Def]{Lemma}
\newcommand{\dfn}{\mathrel{\dot{=}}}
\newcommand{\st}{ \ ; \ }
\newcommand{\rarr}{\rightarrow}
\newcommand{\sset}{\subset}
\newcommand{\eset}{\emptyset}
\newcommand{\Z}{\mathbb{Z}}
\newcommand{\N}{\mathbb{N}}
\newcommand{\R}{\mathbb{R}}
\newcommand{\C}{\mathbb{C}}
\newcommand{\TR}[5]{\begin{array}{c c c c c}
    {#1} & : & {#3} & \longrightarrow & {#5}\\
    & & {#2} & \longmapsto & {#4}
  \end{array}
}
\DeclareMathOperator{\Span}{\mathrm{span}}
\DeclareMathOperator{\tr}{\mathrm{trace}}
\newcommand{\del}{\partial}
\newcommand{\dd}{\mathrm{d}}
\renewcommand{\Re}{\mathsf{Re}}
\renewcommand{\Im}{\mathsf{Im}}
\newcommand{\e}{\mathbf{e}}
\DeclareMathOperator{\supp}{\mathrm{supp}}
\DeclareMathOperator{\GL}{\mathrm{GL}}
\DeclareMathOperator{\gl}{\mathfrak{gl}}
\newcommand{\gr}[1]{\mathfrak{#1}}
\DeclareMathOperator{\Ad}{\mathrm{Ad}}
\DeclareMathOperator{\ad}{\mathrm{ad}}
\newcommand{\cv}[1]{\mathbf{#1}}
\author{Gabriel Ara{\'u}jo}
\address{University of S{\~a}o Paulo, ICMC-USP, S{\~a}o Carlos, SP, Brazil}
\email{gccsa@icmc.usp.br}
\thanks{This work was partially supported by Conselho Nacional de Desenvolvimento Cient{\'i}fico e Tecnol{\'o}gico (CNPq, grants~131876/2010-4 and~140838/2012-0) and the S{\~a}o Paulo Research Foundation (FAPESP, grant~2018/12273-5).}
\keywords{trajectory tracking, control-affine driftless systems, compact Lie groups.} 
\subjclass[2010]{93B27, 93D15}
\title[Periodic trajectory tracking on Lie groups]{Periodic trajectory tracking for control-affine driftless systems on compact Lie groups}
\begin{document}

\begin{abstract} We treat the \emph{periodic trajectory tracking problem}: given a periodic trajectory of a control-affine, left-invariant driftless system in a compact and connected Lie group $G$ and an initial condition in $G$, find another trajectory of the system satisfying the initial condition given and that asymptotically tracks the periodic trajectory. We solve this problem locally (for initial conditions in a neighborhood of some point of the periodic trajectory) when $G$ is semisimple and the system is Lie-determined (i.e.~controllable), and only for a class of periodic trajectories (which we call \emph{regular}). Finally we present a set of sufficient conditions to ensure the existence of such trajectories.
\end{abstract}

\maketitle

\section{Introduction}

The present work addresses the problem of periodic trajectory tracking for control-affine driftless systems, specifically in the case when the ambient manifold is a Lie group $G$ (which we will further assume to be compact and connected) and the system is left-invariant (see below). It is heavily inspired by~\cite{original} (see also its first author's PhD thesis~\cite{silveira_thesis}), in which the problem is studied in $\mathrm{SU}(n)$ aiming applications to Quantum Computing, and can indeed be considered as a (tentative) extension of their methods to abstract Lie groups. We do not, however, rely on any of their results or even notations directly, but rather on their ideas; nor we aim at any applications whatsoever.

In Section~\ref{sec:pmpp} we describe the periodic trajectory tracking problem (PTTP) for our system~\eqref{eq:lids} and reduce it to the problem of stabilization of an auxiliary system~\eqref{eq:mod_sys}. The main conclusion here is that if the identity element of $e$ of $G$ is a critical point, and moreover a local attractor of this new system, then one can solve the PTTP locally i.e.~for initial conditions close to the reference periodic trajectory. This leads us to investigate some aspects of the stability of time-dependent vector fields on compact Riemannian manifolds, which we do in Section~\ref{sec:stability}, and then apply our conclusions to characterize the $\omega$-limit points of an auxiliary vector field $W$ associated to~\eqref{eq:mod_sys}: they are precisely the critical points of $W$. We also conclude that every central point of $G$ is critical to $W$, so a necessary condition for our approach to work is that $G$ is semisimple e.g.~$\mathrm{SU}(n)$.

In Section~\ref{sec:regtraj} we restrict our attention to a class of periodic trajectories, which we call \emph{regular}, for which an even simpler characterization of the critical points of the associated $W$ is achieved: they are the critical points of a Lyapunov-like function $V$; and moreover central points of $G$ are non-degenerate critical points of $V$ provided $G$ is semisimple. A little more effort then allows us to conclude that, in that case, the latter points are also local attractors of $W$, and since $e$ is obviously central we solve the PTTP locally. We close this work (Section~\ref{sec:existence}) discussing a condition that ensures the existence of periodic trajectories, including a more or less concrete construction of them.

We refer the reader to~\cite{sachkov07} and~\cite{jurdjevic_gct} for the basics of Control Theory on Lie groups. For more sophisticate aspects of Lie group theory -- notably some results regarding the adjoint representation of $G$, to which we are naturally led by the change of variables that produces the auxiliary system~\eqref{eq:mod_sys} and that stalks us until the end, revealing how semisimplicity is an essential feature to the problem -- the reader is referred to a less introductory text on the subject e.g.~\cite{knapp_lgbi}; more paramount results and definitions, as well as possibly non-standard notation, are also briefly explained in the footnotes.

\subsection*{Acknowledgments} I wish to thank H.~B.~Silveira and P.~A.~Tonelli for reading the original manuscript and making invaluable suggestions, and the latter also for many long discussions and for proposing the problem.

\section{The periodic trajectory tracking problem} \label{sec:pmpp}

Let $G$ be a compact, connected Lie group, whose Lie algebra of left-invariant vector fields we denote by $\gr{g}$. Given $X_1, \ldots, X_m \in \gr{g}$ we consider the left-invariant driftless system
\begin{align}
  \cv{x}' &= \sum_{k = 1}^m u_k X_k(\cv{x}) \label{eq:lids}
\end{align}
where $u_1, \ldots, u_m \in \R$ are controls. We shall work exclusively with smooth trajectories: $(m + 1)$-uples $(\cv{x}, u_1, \ldots, u_m)$ where $u_1, \ldots, u_m: \R \rarr \R$ are smooth (i.e.~$C^\infty$) functions -- the controls -- and $\cv{x}: \R \rarr G$ is an integral curve of the time-dependent vector field
\begin{align*}
  X(t,x) &\dfn \sum_{k = 1}^m u_k(t) X_k(x).
\end{align*}
The trajectory is said to be \emph{$T$-periodic} ($T > 0$) provided $\cv{x}, u_1, \ldots, u_m$ are $T$-periodic functions.

For simplicity, we shall assume that $\Gamma \dfn \Span \{ X_1, \ldots, X_m \}$ is bracket-generating i.e.~the Lie algebra generated by $\Gamma$ is $\gr{g}$, and hence $\Gamma$ has a single orbit thanks to Sussmann's Theorem.

\begin{Def} The \emph{periodic trajectory tracking problem} (PTTP) for system~\eqref{eq:lids} is stated as follows: given a $T$-periodic reference trajectory $(\cv{x}_r, u_1^r, \ldots, u_m^r)$ and an initial state $x_0 \in G$, find another (non-periodic) trajectory $(\cv{x}, u_1, \ldots, u_m)$ of~\eqref{eq:lids} such that $\cv{x}(0) = x_0$ and\footnote{$e$: the identity element of $G$.}
  \begin{align*}
    \lim_{t \to \infty} \cv{x}(t) \cdot \cv{x}_r(t)^{-1} &= e.
  \end{align*}
  We call $\cv{x} \cdot \cv{x}_r^{-1}$ the \emph{tracking error} between the two trajectories.
\end{Def}

\begin{Rem} The motivation for our definition of tracking error comes from concrete examples. If $G$ is a subgroup of $\GL(n, \C)$ and $\cv{x}, \cv{y}: \R \rarr G$ are curves then clearly
  \begin{align*}
    \lim_{t \to \infty} \cv{x}(t) \cdot \cv{y}(t)^{-1} = e &\Longleftrightarrow \lim_{t \to \infty} \| \cv{x}(t) - \cv{y}(t) \| = 0
  \end{align*}
  where $\| \cdot \|$ is any matrix norm.

  Moreover, consider the following asymptotic controllability problem (also sometimes called the \emph{$T$-sampling stabilization problem}, see for instance~\cite{spr14}) for system~\eqref{eq:lids}:
  \begin{quote}
    Given an initial state $x_0 \in G$ and a target state $x_\infty \in G$, find a trajectory $(\cv{x}, u_1, \ldots, u_m)$ of~\eqref{eq:lids} such that, for some $T > 0$, we have
    \begin{align*}
      \lim_{k \to \infty} \cv{x}(kT) &= x_\infty.
    \end{align*}
  \end{quote}
  It is clear that this problem can be solved if we are able to find
  \begin{enumerate}
  \item $(\cv{x}_r, u_1^r, \ldots, u_m^r)$ a periodic reference trajectory for~\eqref{eq:lids} with $\cv{x}_r(0) = x_\infty$ and
  \item $(\cv{x}, u_1, \ldots, u_m)$ a trajectory of~\eqref{eq:lids} that tracks $(\cv{x}_r, u_1^r, \ldots, u_m^r)$ i.e.~solving the PTTP.
  \end{enumerate}
  While the second question above is the main subject of the present paper, we shall discuss the first one -- the existence of periodic reference trajectories passing through arbitrary points of $G$ -- in Section~\ref{sec:existence}.
\end{Rem}

The very definition of the tracking error suggests that we can reduce the PTTP associated to a given reference trajectory $(\cv{x}_r, u_1^r, \ldots, u_m^r)$ to a stabilization problem, via a change of coordinates that we describe below. From now on we denote
\begin{align*}
  x_\infty &\dfn \cv{x}_r(0).
\end{align*}

\begin{Prop} \label{prop:mod_sys} Assume that $(\cv{z}, v_1, \ldots, v_m)$ is a trajectory of the system\footnote{The adjoint map $\Ad: G \rarr \GL(\gr{g})$ is the group homomorphism that associates to each $x \in G$ an invertible linear map $\Ad(x): \gr{g} \rarr \gr{g}$ as follows: if $I_x$ stands for the map $y \in G \mapsto x \cdot y \cdot x^{-1} \in G$ then $\Ad(x)$ corresponds to $\dd (I_x)_e: T_eG \rarr T_eG$ via the canonical isomorphism $\gr{g} \cong T_eG$.}
  \begin{align}
    \cv{z}' &= \sum_{k = 1}^m v_k \Ad(\cv{x}_r(t)) X_k(\cv{z}) \label{eq:mod_sys}
  \end{align}
  such that $\cv{z}(0) = x_0 \cdot x_\infty^{-1}$ and $\lim_{t \to \infty} \cv{z}(t) = e$. If we define
  \begin{align}
    \cv{x} &\dfn \cv{z} \cdot \cv{x}_r, \label{eq:x_track} \\
    u_k &\dfn v_k + u_k^r, \quad k \in \{1, \ldots, m\} \label{eq:u_track}
  \end{align}
  then $(\cv{x}, u_1, \ldots, u_m)$ is a trajectory of~\eqref{eq:lids} solving the PTTP i.e.~$\cv{x}(0) = x_0$ and $\lim_{t \to \infty} \cv{x}(t) \cdot \cv{x}_r(t)^{-1} = e$.
  \begin{proof} It is essentially based on the following simple fact -- a kind of Leibniz rule for curves on $G$ -- which the reader can easily verify: given $\cv{x}_1, \cv{x}_2: \R \rarr G$ two smooth curves we have\footnote{For $x \in G$ we denote by $L_x$ (resp.~$R_x$) the left (resp.~right) translation map~$y \in G \mapsto x \cdot y \in G$ (resp.~$y \in G \mapsto y \cdot x \in G$).}
    \begin{align*}
      (\cv{x}_1 \cdot \cv{x}_2)' &= \dd R_{\cv{x}_2} \cv{x}_1' + \dd L_{\cv{x}_1} \cv{x}_2'.
    \end{align*}

    Let $(\cv{x}, u_1, \ldots, u_m)$ be defined by~\eqref{eq:x_track}-\eqref{eq:u_track}. Then
    \begin{align*}
      \cv{x}' = (\cv{z} \cdot \cv{x}_r)' = \dd R_{\cv{x}_r} \cv{z}' + \dd L_{\cv{z}} \cv{x}_r' = \dd R_{\cv{x}_r} \sum_{k = 1}^m v_k \Ad(\cv{x}_r) X_k(\cv{z}) + \dd L_{\cv{z}} \sum_{k = 1}^m u_k^r X_k (\cv{x}_r)
    \end{align*}
    where the first sum can be rewritten as
    \begin{align*}
      \dd R_{\cv{x}_r} \sum_{k = 1}^m v_k \Ad(\cv{x}_r) X_k(\cv{z}) &= \dd R_{\cv{x}_r} \sum_{k = 1}^m v_k (R_{\cv{x}_r^{-1}})_* X_k(\cv{z})\\
      &= \dd R_{\cv{x}_r} \sum_{k = 1}^m v_k \dd R_{\cv{x}_r^{-1}} X_k(R_{\cv{x}_r} \cv{z}) \\
      &= \dd R_{\cv{x}_r} \dd R_{\cv{x}_r^{-1}} \sum_{k = 1}^m v_k X_k(\cv{x}_r \cdot \cv{z}) \\
      &= \sum_{k = 1}^m v_k X_k(\cv{x})
    \end{align*}
    while the second is
    \begin{align*}
      \dd L_{\cv{z}} \sum_{k = 1}^m u_k^r X_k (\cv{x}_r) = \sum_{k = 1}^m u_k^r \dd L_{\cv{z}} X_k (\cv{x}_r) = \sum_{k = 1}^m u_k^r  X_k (L_{\cv{z}} \cv{x}_r) = \sum_{k = 1}^m u_k^r  X_k (\cv{x})
    \end{align*}
    Summing it up and using~\eqref{eq:u_track} we conclude that $\cv{x}$ solves~\eqref{eq:lids}. Moreover
    \begin{align*}
      \cv{x}(0) = \cv{z}(0) \cdot \cv{x}_r(0) = x_0 \cdot x_\infty^{-1} \cdot x_\infty = x_0
    \end{align*}
    and
    \begin{align*}
      \lim_{t \to \infty} \cv{x}(t) \cdot \cv{x}_r(t)^{-1} = \lim_{t \to \infty} \cv{z}(t) = e.
    \end{align*}
  \end{proof}
\end{Prop}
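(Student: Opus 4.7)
The plan is to verify the three conclusions by direct computation, with the central technical tool being a Leibniz-type product rule for curves on $G$: for smooth $\cv{x}_1, \cv{x}_2 : \R \rarr G$ one has
\begin{align*}
  (\cv{x}_1 \cdot \cv{x}_2)' &= \dd R_{\cv{x}_2} \cv{x}_1' + \dd L_{\cv{x}_1} \cv{x}_2',
\end{align*}
which follows by differentiating the group multiplication and applying the chain rule. With this in hand the computation reduces to unpacking notation.

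First I would differentiate $\cv{x} = \cv{z} \cdot \cv{x}_r$ using the product rule above, splitting $\cv{x}'$ into a term involving $\cv{z}'$ and a term involving $\cv{x}_r'$. Substituting the ODE satisfied by $\cv{z}$ into the first yields $\dd R_{\cv{x}_r} \sum_k v_k \Ad(\cv{x}_r) X_k(\cv{z})$, and the goal is to simplify this to $\sum_k v_k X_k(\cv{x})$. The key observation is that $\Ad(g) = (I_g)_* = (L_g)_* \circ (R_{g^{-1}})_*$, and since $X_k$ is left-invariant we have $(L_g)_* X_k = X_k$, whence $\Ad(\cv{x}_r) X_k = (R_{\cv{x}_r^{-1}})_* X_k$. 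Evaluating this pushforward at $\cv{z}$ and composing with $\dd R_{\cv{x}_r}$ cancels out the right translations, and using $R_{\cv{x}_r} \cv{z} = \cv{z} \cdot \cv{x}_r = \cv{x}$ collapses the whole expression to $\sum_k v_k X_k(\cv{x})$. For the second term, left-invariance of $X_k$ gives $\dd L_{\cv{z}} X_k(\cv{x}_r) = X_k(\cv{z} \cdot \cv{x}_r) = X_k(\cv{x})$ directly, yielding $\sum_k u_k^r X_k(\cv{x})$.

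Adding the two pieces and substituting $u_k = v_k + u_k^r$ then produces $\cv{x}' = \sum_k u_k X_k(\cv{x})$, so $(\cv{x}, u_1, \ldots, u_m)$ is indeed a trajectory of~\eqref{eq:lids}. The remaining claims are immediate: $\cv{x}(0) = \cv{z}(0) \cdot \cv{x}_r(0) = (x_0 \cdot x_\infty^{-1}) \cdot x_\infty = x_0$, and $\cv{x}(t) \cdot \cv{x}_r(t)^{-1} = \cv{z}(t) \to e$ by hypothesis. The only step requiring genuine care is the identification of $\Ad(\cv{x}_r) X_k$ as a pushforward by right translation (exploiting left-invariance to absorb the $\dd L$ factor); everything else is bookkeeping, and no analytic subtleties arise since all objects are smooth and $G$ is a Lie group.
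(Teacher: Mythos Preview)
Your proposal is correct and follows essentially the same approach as the paper: both use the Leibniz product rule $(\cv{x}_1 \cdot \cv{x}_2)' = \dd R_{\cv{x}_2} \cv{x}_1' + \dd L_{\cv{x}_1} \cv{x}_2'$, identify $\Ad(\cv{x}_r) X_k$ with $(R_{\cv{x}_r^{-1}})_* X_k$ via left-invariance to simplify the first term, invoke left-invariance directly for the second, and then check the initial condition and the limit. Your explicit remark that $\Ad(g) = (L_g)_* \circ (R_{g^{-1}})_*$ with $(L_g)_* X_k = X_k$ is in fact a slightly cleaner justification of the key identity than the paper gives.
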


Thanks to Proposition~\ref{prop:mod_sys}, in order to solve the PTTP our main concern shall be, from now on, to find a trajectory $(\cv{z}, v_1, \ldots, v_m)$ of system~\eqref{eq:mod_sys} satisfying $\cv{z}(0) = x_0 \cdot x_\infty^{-1}$ and $\lim_{t \to \infty} \cv{z}(t) = e$: the solution $(\cv{x}, u_1, \ldots, u_m)$ of the PTTP for~\eqref{eq:lids} can thus be recovered from our knowledge of $(\cv{z}, v_1, \ldots, v_m)$ and $(\cv{x}_r, u_1^r, \ldots, u_m^r)$.

We define a \emph{Lyapunov-like function} $V: G \rarr \R$ by
\begin{align}
  V(x) &\dfn \tr \Ad(x), \quad x \in G, \label{eq:Vdef}
\end{align}
and an \emph{auxiliary vector field} $W: \R \times G \rarr TG$ by
\begin{align*}
  W(t,w) &\dfn \sum_{k = 1}^m a_k(t,w) \Ad(\cv{x}_r(t)) X_k(w), \quad (t,w) \in \R \times G,
\end{align*}
where
\begin{align}
  a_k(t,w) &\dfn \dd V \left( \Ad(\cv{x}_r(t)) X_k(w) \right), \quad k \in \{1, \ldots, m\}. \label{eq:akdef} 
\end{align}
Notice that $W$ is a time-dependent vector field which is \emph{not} left-invariant. The main reason for introducing it is the following: if $\cv{w}: \R \rarr G$ is one of its integral curves and if we define
\begin{align}
  v_k(t) &\dfn a_k(t, \cv{w}(t)), \quad t \in \R, \ k \in \{1, \ldots, m\}, \label{eq:vk}
\end{align}
then $(\cv{w}, v_1, \ldots, v_m)$ is a trajectory of our modified system~\eqref{eq:mod_sys}. Moreover, let us denote by $\mathcal{C}_W$ the set of critical points of $W$, that is:
\begin{align*}
  \mathcal{C}_W &\dfn \{ w \in G \st W(t,w) = 0, \ \forall t \in \R \}.
\end{align*}
Recall that one such critical point $w \in \mathcal{C}_W$ is a \emph{local attractor} if there exists $U \sset G$ a neighborhood of $w$ such that given any initial condition $(t_0, w_0) \in \R \times U$ and $\cv{w}: \R \rarr G$ the unique integral curve of $W$ satisfying $\cv{w}(t_0) = w_0$ then $\lim_{t \to \infty} \cv{w}(t) = w$.

The next result tells us that if the identity element of $G$ is a local attractor of the auxiliary vector field $W$ then we can solve the PTTP \emph{locally} near the target state $x_\infty = \cv{x}_r(0)$, and also provides a recipe to obtain the tracking trajectory $(\cv{x}, u_1, \ldots, u_m)$.
\begin{Prop} \label{prop:elocalattrac} Suppose that $e \in \mathcal{C}_W$ and is a local attractor for $W$. Then there exists $U_\infty \sset G$ a neighborhood of $x_\infty$ enjoying the following property: for every $x_0 \in U_\infty$ there exists $(\cv{x}, u_1, \ldots, u_m)$ a trajectory of~\eqref{eq:lids} such that $\cv{x}(0) = x_0$ and $\lim_{t \to \infty} \cv{x}(t) \cdot \cv{x}_r(t)^{-1} = e$. The trajectory $(\cv{x}, u_1, \ldots, u_m)$ can be obtained as follows: for $\cv{w}: \R \rarr G$ the unique integral curve of $W$ satisfying $\cv{w}(0) = x_0 \cdot x_\infty^{-1}$, define
  \begin{align*}
    \cv{x}(t) &\dfn \cv{w}(t) \cdot \cv{x}_r(t), \\
    u_k(t) &\dfn a_k (t, \cv{w}(t)) + u_k^r(t), \quad k \in \{1, \ldots, m\}.
  \end{align*}
  \begin{proof} Let $U \sset G$ be an attractive neighborhood of $e$. Then $U_\infty \dfn U \cdot x_\infty$ is clearly a neighborhood of $x_\infty$, and if $x_0 \in U_\infty$ then $x_0 \cdot x_\infty^{-1} \in U$, hence $\lim_{t \to \infty} \cv{w}(t) = e$. If $v_1, \ldots, v_m$ are as in~\eqref{eq:vk} then $(\cv{w}, v_1, \ldots, v_m)$ is a trajectory of the modified system~\eqref{eq:mod_sys}, so the conclusion follows from Proposition~\ref{prop:mod_sys}.
  \end{proof}
\end{Prop}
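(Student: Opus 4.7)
The plan is to assemble the pieces already at our disposal. Proposition~\ref{prop:mod_sys} reduces the PTTP to finding a trajectory $(\cv{z}, v_1, \ldots, v_m)$ of~\eqref{eq:mod_sys} satisfying $\cv{z}(0) = x_0 \cdot x_\infty^{-1}$ and $\lim_{t \to \infty} \cv{z}(t) = e$; meanwhile the remark preceding the statement of the proposition observes that any integral curve $\cv{w}$ of $W$, paired with the controls $v_k(t) \dfn a_k(t, \cv{w}(t))$ as in~\eqref{eq:vk}, is automatically a trajectory of~\eqref{eq:mod_sys}. So the whole task reduces to producing an integral curve of $W$ that starts sufficiently close to $e$ and converges to $e$, and then translating back via $\cv{x}_r$.

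First I would invoke the local attractor hypothesis: it provides an open $U \sset G$ containing $e$ such that every integral curve of $W$ with initial data in $\R \times U$ converges to $e$. Since right translation by $x_\infty$ is a diffeomorphism of $G$, the set $U_\infty \dfn U \cdot x_\infty$ is an open neighborhood of $x_\infty$. For $x_0 \in U_\infty$, we have $w_0 \dfn x_0 \cdot x_\infty^{-1} \in U$, and the unique integral curve $\cv{w}: \R \rarr G$ of $W$ with $\cv{w}(0) = w_0$ therefore satisfies $\lim_{t \to \infty} \cv{w}(t) = e$. The compactness of $G$ ensures that $\cv{w}$ is defined for all positive times, so this limit statement is meaningful.

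Finally, with $v_k$ defined by~\eqref{eq:vk}, the triple $(\cv{w}, v_1, \ldots, v_m)$ is a trajectory of~\eqref{eq:mod_sys} with the desired initial and asymptotic behavior, and Proposition~\ref{prop:mod_sys} delivers the tracking trajectory $(\cv{x}, u_1, \ldots, u_m)$ of~\eqref{eq:lids} via $\cv{x} \dfn \cv{w} \cdot \cv{x}_r$ and $u_k \dfn v_k + u_k^r$, which is exactly the recipe stated. I do not anticipate any genuine obstacle in this argument: the substance of the proposition is already packaged in Proposition~\ref{prop:mod_sys} together with the upstream identification of integral curves of $W$ as trajectories of~\eqref{eq:mod_sys}; the remaining bookkeeping is simply the observation that right translation is a diffeomorphism and that compactness of $G$ makes the flow of $W$ complete.
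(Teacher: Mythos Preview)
Your argument is correct and essentially identical to the paper's own proof: both define $U_\infty \dfn U \cdot x_\infty$ from an attractive neighborhood $U$ of $e$, observe that $x_0 \in U_\infty$ forces $x_0 \cdot x_\infty^{-1} \in U$ so that the integral curve $\cv{w}$ of $W$ converges to $e$, and then invoke Proposition~\ref{prop:mod_sys} with $v_k$ as in~\eqref{eq:vk}. You add a couple of explanatory remarks (that right translation is a diffeomorphism and that compactness guarantees completeness of the flow) which the paper leaves implicit, but there is no substantive difference.
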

\section{Some results on stability} \label{sec:stability}

In this section we shall depart a little from the original setting for the PTTP and establish some technical results on the stability of time-dependent vector fields that will be needed in the next sections. Since the group structure here plays no role, we shall take a step back and work in the more general framework of smooth manifolds.

\begin{Rem} As pointed out by H.~B.~Silveira in personal communication, our approach in this section (see especially Proposition~\ref{prop:figualzero}) holds some connections with the periodic version of LaSalle's Invariance Principle~\cite{lasalle66} (for its use in a similar context see~\cite{spr14}). The proofs we present here are, nevertheless, self-contained.
\end{Rem}

Let $M$ be a smooth manifold, which for simplicity we assume to be compact, and $X: \R \times M \rarr TM$ a time-dependent vector field. Recall that given $(t_0, x_0) \in \R \times M$ its \emph{$\omega$-limit set}, $\omega_X(t_0, x_0)$, is the set of all $x \in M$ enjoying the following property: there exists an increasing sequence $t_n \to \infty$ such that $\cv{x}(t_n) \to x$, where $\cv{x}: \R \rarr M$ is the unique integral curve of $X$ satisfying $\cv{x}(t_0) = x_0$. Of course the compactness of $M$ ensures that the $\omega$-limit sets of $X$ are never empty.

\begin{Def} A continuous function $V: M \rarr \R$ is said to be \emph{non-decreasing along $X$} if for every integral curve $\cv{x}: \R \rarr M$ of $X$ the function $V \circ \cv{x}$ is non-decreasing.
\end{Def}
For instance, if $V \in C^\infty(M)$ satisfies $\dd V (X(t,x)) \geq 0$ everywhere then clearly $V$ is non-decreasing along $X$.
\begin{Prop} Let $V: M \rarr \R$ be continuous and non-decreasing along $X$. Then $V$ is constant on $\omega_X(t_0, x_0)$ for any $(t_0, x_0) \in \R \times M$.
  \begin{proof} Let $\cv{x}: M \rarr \R$ be the unique integral curve of $X$ satisfying $\cv{x}(t_0) = x_0$. For $j = 1,2$ let $x_j \in \omega_X(t_0, x_0)$ and take increasing sequences $\{ t_n^j \}_{n \in \N}$, $t_n^j \to \infty$, such that $\cv{x}(t_n^j) \to x_j$ as $n \to \infty$. By continuity, $V(\cv{x}(t_n^j)) \to V(x_j)$ and since $V \circ \cv{x}$ is non-decreasing we must have
    \begin{align*}
      V(\cv{x}(t_n^j)) &\leq V(x_j), \quad \forall n \in \N, \ j = 1,2.
    \end{align*}
    We first extract a subsequence $\{ t_{n_k}^2 \}_{k \in \N}$ of $\{ t_n^2 \}_{n \in \N}$ with the property that $t_k^1 \leq t_{n_k}^2$ for every $k \in \N$: again, since $V$ is non-decreasing along $X$ one gets
    \begin{align*}
      V(\cv{x}(t_k^1)) \leq V(\cv{x}(t_{n_k}^2)) \leq V(x_2), \quad \forall k \in \N.
    \end{align*}
    By letting $k \to \infty$ in the left-hand side of the inequality above we conclude that $V(x_1) \leq V(x_2)$, and hence the equality holds.
  \end{proof}
\end{Prop}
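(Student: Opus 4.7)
The statement is essentially a standard fact about monotone functions along flows, so my plan is to exploit the fact that $V \circ \cv{x}$ is a non-decreasing real function that must converge as $t \to \infty$. Concretely, let $\cv{x} : \R \rarr M$ be the unique integral curve of $X$ with $\cv{x}(t_0) = x_0$. Since $M$ is compact and $V$ is continuous, $V$ is bounded on $M$, hence the composition $V \circ \cv{x} : \R \rarr \R$ is both non-decreasing (by hypothesis) and bounded above. Therefore the limit
\begin{align*}
L \dfn \lim_{t \to \infty} V(\cv{x}(t)) = \sup_{t \in \R} V(\cv{x}(t))
\end{align*}
exists and is finite.

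Now for any $x \in \omega_X(t_0, x_0)$, choose an increasing sequence $t_n \to \infty$ with $\cv{x}(t_n) \to x$. By continuity of $V$ we have $V(\cv{x}(t_n)) \to V(x)$; on the other hand, the sequence $\{ V(\cv{x}(t_n)) \}_{n \in \N}$ is a subsequence of a convergent net and therefore also converges to $L$. Uniqueness of the limit in $\R$ forces $V(x) = L$, and since this identity holds for every $x \in \omega_X(t_0, x_0)$ the function $V$ is constant (equal to $L$) on the $\omega$-limit set.

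There is really no main obstacle here: the only subtlety is ensuring that the relevant supremum is finite, which is handled at once by the compactness assumption on $M$ together with the continuity of $V$. The authors appear to favour a more hands-on comparison argument -- extracting a subsequence $t_{n_k}^2$ of one sequence that dominates termwise another sequence $t_k^1$ and then squeezing $V(\cv{x}(t_k^1))$ between $V(\cv{x}(t_{n_k}^2))$ and $V(x_2)$ -- but this is logically equivalent to (and a slight unpacking of) the monotone-bounded-convergence observation I just described.
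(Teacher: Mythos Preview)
Your proof is correct and is essentially a repackaging of the paper's argument: both hinge on the monotonicity of $V \circ \cv{x}$, with you invoking the monotone-bounded convergence theorem to name the limit $L$ upfront and then identifying every $\omega$-limit value with it, whereas the paper compares two $\omega$-limit points directly via interleaved subsequences without ever naming $L$. As you yourself observe, the two arguments are logically equivalent; your version is slightly slicker, while the paper's avoids the (here harmless) appeal to boundedness of $V$.
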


\begin{Cor} \label{cor:ndecimpllim}If $V: M \rarr \R$ be continuous and non-decreasing along $X$ then
  \begin{align*}
    \lim_{t \to \infty} V(\cv{x}(t)) &= V(x), \quad \forall x \in \omega_X(t_0, x_0),
  \end{align*}
  where $\cv{x}: M \rarr \R$ be the unique integral curve of $X$ satisfying $\cv{x}(t_0) = x_0$.
  \begin{proof} It suffices to prove that any increasing sequence $\{ t_n \}_{n \in \N}$, $t_n \to \infty$, admits a subsequence $\{ t_{n_k} \}_{k \in \N}$ such that $V(\cv{x}(t_{n_k})) \to V(x)$ as $k \to \infty$. And indeed, by compactness of $M$ there exist $\{ t_{n_k} \}_{k \in \N}$ subsequence of $\{ t_n \}_{n \in \N}$ and $y \in M$ such that $\cv{x}(t_{n_k}) \to y$, and by continuity $V(\cv{x}(t_{n_k})) \to V(y)$, as $k \to \infty$. Since obviously $y \in \omega_X(t_0, x_0)$ we have $V(x) = V(y)$ by the previous proposition, and the conclusion follows.
  \end{proof}
\end{Cor}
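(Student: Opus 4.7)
The plan is to combine the monotonicity of $V \circ \cv{x}$ with the preceding proposition. Because $V$ is non-decreasing along $X$, the scalar function $t \mapsto V(\cv{x}(t))$ is monotone non-decreasing; because $M$ is compact and $V$ is continuous, this function is bounded above on $\R$. The usual monotone convergence argument then produces a real number $L \dfn \lim_{t \to \infty} V(\cv{x}(t))$.

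It remains to identify $L$ with $V(x)$ for every $x \in \omega_X(t_0, x_0)$. By the definition of $\omega$-limit there is an increasing sequence $t_n \to \infty$ with $\cv{x}(t_n) \to x$; continuity of $V$ gives $V(\cv{x}(t_n)) \to V(x)$. Since $t_n \to \infty$, the sequence $\{V(\cv{x}(t_n))\}$ must also converge to $L$. Hence $L = V(x)$, which is exactly the asserted equality.

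I expect no substantial obstacle here: this is essentially a one-line consequence of the fact that bounded monotone real functions have limits, and in fact this direct route sidesteps the previous proposition entirely. A more "proposition-flavored" variant, closer in spirit to what the excerpt seems to favor, would instead argue by subsequences: given any increasing $t_n \to \infty$, extract by compactness of $M$ a subsequence $t_{n_k}$ with $\cv{x}(t_{n_k}) \to y$ for some $y \in M$; note that $y \in \omega_X(t_0, x_0)$ by construction, invoke the previous proposition to get $V(y) = V(x)$, and conclude via continuity that $V(\cv{x}(t_{n_k})) \to V(x)$. The standard argument that a sequence converges as soon as every subsequence admits a further subsequence converging to the same limit then yields $V(\cv{x}(t_n)) \to V(x)$ for every $t_n \to \infty$, i.e.\ $\lim_{t \to \infty} V(\cv{x}(t)) = V(x)$.
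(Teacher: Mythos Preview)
Your proposal is correct. You present two routes, and your second, ``proposition-flavored'' variant is exactly the paper's proof: extract a convergent subsequence by compactness, land on some $y \in \omega_X(t_0,x_0)$, invoke the previous proposition to get $V(y) = V(x)$, and conclude by the subsequence criterion for convergence.

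Your primary route, however, is genuinely different and in fact shorter: you observe directly that $t \mapsto V(\cv{x}(t))$ is monotone and bounded (compactness of $M$, continuity of $V$), hence has a limit $L$; then a single $\omega$-limit sequence identifies $L = V(x)$. This bypasses the previous proposition entirely. The paper's argument, by contrast, deliberately leans on that proposition (constancy of $V$ on $\omega$-limit sets) and never explicitly invokes the monotone-bounded-limit fact. What the paper's approach buys is that its logic would survive under the weaker hypothesis that $V$ be constant on $\omega$-limit sets without $V \circ \cv{x}$ being monotone; what yours buys is brevity and independence from the preceding result.
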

The last two results in this section do not assume compactness of $M$. We do, however, endow it with a Riemannian metric: below we denote by $\| \cdot \|$ the induced norm on each tangent space.

\begin{Lem} \label{lem:unifomegalim} Let $\cv{x}:\R \rarr M$ be a smooth curve such that
  \begin{align*}
    \lim_{t \rarr \infty} \|\cv{x}'(t)\| &= 0
  \end{align*}
  and $\{t_n\}_{n \in \N}$ be an increasing sequence such that $t_n \to \infty$ and $\cv{x}(t_n) \to x \in M$ as $n \to \infty$. Then
  \begin{align*}
    \lim_{n \rarr \infty} \cv{x}(t_n + \epsilon) &= x, \quad \forall \epsilon \in \R.
  \end{align*}
  \begin{proof} We may assume w.l.o.g.~that $M$ is connected, and let $d:M \times M \rarr \R$ be the distance function on $M$ induced by the Riemannian metric\footnote{I.e.~the distance between two given points is the infimum of the lengths of all piecewise smooth curves connecting them.}: we then must prove that
    \begin{align*}
      \lim_{n \rarr \infty} d(\cv{x}(t_n + \epsilon), x) &= 0
    \end{align*}
    whatever $\epsilon \in \R$. If we denote by $I(a,b) \sset \R$ the closed interval with endpoints $a,b \in \R$ then by definition of $d$ we have
     \begin{align*}
      d(\cv{x}(t_n + \epsilon), \cv{x}(t_n)) \leq \left|\int_{I(t_n, t_n + \epsilon)} \|\cv{x}'(t)\| \dd t\right| \leq \left(\sup_{t \in I(t_n, t_n + \epsilon)} \|\cv{x}'(t)\| \right) |\epsilon|
     \end{align*}
     which, we claim, goes to zero as $n \to \infty$. Indeed, given $\delta > 0$ there exists $R > 0$ such that $ \|\cv{x}'(t)\| < \delta$ for every $t > R$. Moreover, since $t_n \to \infty$ there exists $n_0 \in \N$ such that
    \begin{align*}
      n \geq n_0 \Longrightarrow \max\{t_n, t_n + \epsilon\} > R \Longrightarrow \sup_{t \in I(t_n, t_n + \epsilon)} \|\cv{x}'(t)\| < \delta
    \end{align*}
    thus proving our claim. Now for every $n \in \N$
    \begin{align*}
      d(\cv{x}(t_n + \epsilon), x) \leq d(\cv{x}(t_n + \epsilon), \cv{x}(t_n)) + d(\cv{x}(t_n), x) \longrightarrow 0
    \end{align*}
    since both terms go to zero.
  \end{proof}
\end{Lem}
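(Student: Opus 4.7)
The plan is to control the Riemannian distance $d(\cv{x}(t_n + \epsilon), x)$ via the triangle inequality, splitting it as
\begin{align*}
d(\cv{x}(t_n + \epsilon), x) &\leq d(\cv{x}(t_n + \epsilon), \cv{x}(t_n)) + d(\cv{x}(t_n), x).
\end{align*}
The second term tends to $0$ by hypothesis together with the continuity of $d$, so the whole problem reduces to showing that $d(\cv{x}(t_n + \epsilon), \cv{x}(t_n)) \to 0$ as $n \to \infty$ for each fixed $\epsilon$.

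For this first term I would pass through arc length: since $d$ is the infimum of lengths of piecewise smooth paths joining two points, the curve $\cv{x}$ itself (restricted to the closed interval with endpoints $t_n$ and $t_n + \epsilon$) provides an upper bound
\begin{align*}
d(\cv{x}(t_n + \epsilon), \cv{x}(t_n)) &\leq \left| \int_{t_n}^{t_n + \epsilon} \|\cv{x}'(t)\| \, \dd t \right| \leq |\epsilon| \cdot \sup_{t} \|\cv{x}'(t)\|,
\end{align*}
where the supremum is taken over the interval connecting $t_n$ and $t_n + \epsilon$. Given any $\delta > 0$, the assumption $\|\cv{x}'(t)\| \to 0$ yields some $R$ beyond which $\|\cv{x}'(t)\| < \delta$; and since $t_n \to \infty$, eventually both endpoints $t_n$ and $t_n + \epsilon$ exceed $R$, uniformly bounding the supremum above by $\delta$. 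Hence the term can be made smaller than $\delta |\epsilon|$ for all large $n$, and letting $\delta \to 0$ finishes the argument.

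Two minor technical points deserve attention. First, $\epsilon$ may be negative, so I should be careful to write the integral over the closed interval $I(t_n, t_n + \epsilon)$ with endpoints taken in the appropriate order (or simply take absolute values throughout); this is cosmetic but must be handled consistently. Second, to guarantee that the distance function $d$ is well defined and finite I should assume w.l.o.g.\ that $M$ is connected (otherwise one restricts to the connected component containing the tail of the curve, which for large $n$ contains both $\cv{x}(t_n)$ and $\cv{x}(t_n + \epsilon)$). I do not foresee any real obstacle: the entire argument is essentially the statement that a curve with vanishing speed at infinity has uniformly small displacements over bounded time windows, measured in the natural distance.
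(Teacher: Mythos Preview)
Your proposal is correct and follows essentially the same approach as the paper: triangle inequality, arc-length bound $d(\cv{x}(t_n+\epsilon),\cv{x}(t_n)) \leq |\epsilon|\sup_{t\in I(t_n,t_n+\epsilon)}\|\cv{x}'(t)\|$, and then the observation that this supremum tends to zero since eventually the whole interval lies beyond any given $R$. Your two technical remarks (handling the sign of $\epsilon$ via $I(t_n,t_n+\epsilon)$ and assuming $M$ connected) are exactly the ones the paper makes as well.
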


\begin{Prop} \label{prop:figualzero} Let $\cv{x}:\R \rarr M$ be a smooth curve, $\{t_n\}_{n \in \N}$ an increasing sequence and $x \in M$ as in Lemma~\ref{lem:unifomegalim}. Let also $f:\R \times M \rarr \R$ be continuous, $T$-periodic (for some $T > 0$) and such that $\lim_{t \to \infty} f(t,\cv{x}(t)) = 0$. Then
  \begin{align*}
    f(s,x) &= 0, \quad \forall s \in \R.
  \end{align*}
  \begin{proof} Let $s \in \R$. For each $n \in \N$ select $l_n \in \Z$ such that
    \begin{align*}
      s_n &\dfn t_n - l_nT \in [0, T)
    \end{align*}
    hence the sequence $\{s_n\}_{n \in \N}$ admits a convergent subsequence, say
    \begin{align*}
      \lim_{k \to \infty} s_{n_k} &= \theta \in [0,T].
    \end{align*}
    We define
    \begin{align*}
      s_{n_k}^* &\dfn s_{n_k} - \theta + s\\
      t_{n_k}^* &\dfn t_{n_k} - \theta + s = s_{n_k} + l_{n_k}T - \theta + s = s_{n_k}^* + l_{n_k}T.
    \end{align*}
    for each $k \in \N$, so clearly $s_{n_k}^* \to s$. Applying Lemma~\ref{lem:unifomegalim} with $\epsilon \dfn -\theta + s$ one gets
    \begin{align*}
      \cv{x}(t_{n_k}^*) = \cv{x}(t_{n_k} + \epsilon) \longrightarrow x.
    \end{align*}
    Since $f$ is continuous and $T$-periodic we have 
    \begin{align*}
      f(s,x) = \lim_{k \to \infty} f(s_{n_k}^*, \cv{x}(t_{n_k}^*)) = \lim_{k \to \infty} f(t_{n_k}^* - l_{n_k}T, \cv{x}(t_{n_k}^*)) = \lim_{k \rarr \infty} f(t_{n_k}^*, \cv{x}(t_{n_k}^*))
    \end{align*}
    which is zero thanks to our last hypothesis on $f$ and the fact that $t_{n_k}^* \to \infty$.
  \end{proof}
\end{Prop}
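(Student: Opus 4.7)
The plan is to fix an arbitrary $s \in \R$ and produce a sequence of times $\tau_k \to \infty$ satisfying two conditions: (a) $\cv{x}(\tau_k) \to x$, and (b) there exist integers $m_k$ with $\tau_k - m_k T \to s$. Once both hold, the $T$-periodicity and continuity of $f$ yield
\begin{align*}
  f(s, x) = \lim_{k \to \infty} f(\tau_k - m_k T, \cv{x}(\tau_k)) = \lim_{k \to \infty} f(\tau_k, \cv{x}(\tau_k)),
\end{align*}
and the last limit vanishes by hypothesis since $\tau_k \to \infty$.

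To build such $\tau_k$, I would first reduce the given times modulo $T$: write each $t_n = s_n + l_n T$ with $s_n \in [0, T)$ and $l_n \in \Z$. The sequence $\{s_n\}$ lies in the compact interval $[0, T]$, so by Bolzano--Weierstrass some subsequence $\{s_{n_k}\}$ converges to $\theta \in [0, T]$. Setting $\epsilon \dfn s - \theta$ and $\tau_k \dfn t_{n_k} + \epsilon$, property (b) is immediate with $m_k \dfn l_{n_k}$, since $\tau_k - l_{n_k} T = s_{n_k} - \theta + s \to s$.

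Property (a) is where Lemma~\ref{lem:unifomegalim} enters, and it is the only delicate part: the lemma tells us that the uniform decay $\|\cv{x}'(t)\| \to 0$ makes a fixed time shift harmless, so from $\cv{x}(t_{n_k}) \to x$ I deduce $\cv{x}(\tau_k) = \cv{x}(t_{n_k} + \epsilon) \to x$. The main subtlety I foresee is ensuring that $\epsilon$ is a genuine constant and not accidentally indexed by $k$; this is precisely why one must first pass to a subsequence on which the fractional parts $s_n$ converge, rather than trying to align each $t_n$ individually with $s$ modulo $T$. Beyond that, everything is a matter of bookkeeping.
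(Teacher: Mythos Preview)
Your proposal is correct and follows essentially the same argument as the paper: your $\tau_k$ is exactly the paper's $t_{n_k}^*$, your $m_k$ is its $l_{n_k}$, and your shift $\epsilon = s - \theta$ matches theirs. You also correctly pinpoint the one genuine subtlety, namely that one must pass to a subsequence of the fractional parts first so that $\epsilon$ is a fixed constant and Lemma~\ref{lem:unifomegalim} applies.
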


Now back to the setup of Section~\ref{sec:pmpp}, we use our results above to prove:
\begin{Thm} \label{thm:akvale0emOmega} Let $(t_0,w_0) \in \R \times G$. If $w \in \omega_W(t_0,w_0)$ then
  \begin{align*}
    a_k(t,w) &= 0, \quad \forall t \in \R, \ \forall k \in \{1, \ldots, m\}.
  \end{align*}
  In particular, every $\omega$-limit point is a critical point of $W$.
\end{Thm}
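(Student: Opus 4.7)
The natural strategy is to treat $V$ as a Lyapunov function for $W$. Along any integral curve $\cv{w}$ of $W$, a direct computation using the definition~\eqref{eq:akdef} of $a_k$ yields
\begin{align*}
\frac{\dd}{\dd t} V(\cv{w}(t)) = \dd V\Big( \sum_{k=1}^m a_k(t,\cv{w}(t))\,\Ad(\cv{x}_r(t)) X_k(\cv{w}(t)) \Big) = \sum_{k=1}^m a_k(t,\cv{w}(t))^2 \geq 0,
\end{align*}
so $V\circ\cv{w}$ is non-decreasing. Since $V$ is continuous on the compact manifold $G$ it is bounded, so $V\circ\cv{w}$ converges as $t\to\infty$, and consequently the nonnegative function $g(t) := \sum_k a_k(t,\cv{w}(t))^2$ is integrable on $[t_0,\infty)$.

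The crux is to upgrade this $L^1$-integrability into pointwise decay $g(t) \to 0$. I would check that $g$ has a uniformly bounded $t$-derivative: the $a_k$ are smooth and $T$-periodic in $t$ (because $\cv{x}_r$ is), and live on the compact product $\R/T\Z\times G$; moreover $\cv{w}'(t)=W(t,\cv{w}(t))$ is itself uniformly bounded for the same reason. A Barbalat-type argument then forces $g(t)\to 0$, which delivers simultaneously $a_k(t,\cv{w}(t))\to 0$ for each $k$ and $\|\cv{w}'(t)\|\to 0$ (the latter because $\|W(t,\cdot)\|$ is dominated by a constant multiple of $g^{1/2}$, by Cauchy--Schwarz and compactness of $G$). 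This is the only step that is not a straight unpacking of definitions, and it is precisely the reason Lemma~\ref{lem:unifomegalim} and Proposition~\ref{prop:figualzero} were set up in Section~\ref{sec:stability}: integrability alone gives nothing pointwise, and the bound on $\|\cv{w}'\|$ is exactly what feeds Lemma~\ref{lem:unifomegalim}.

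With these two conclusions in hand, fix $w\in\omega_W(t_0,w_0)$ and an increasing sequence $t_n\to\infty$ with $\cv{w}(t_n)\to w$. For each $k$, the function $(t,y)\mapsto a_k(t,y)$ is continuous, $T$-periodic in $t$, and satisfies $a_k(t,\cv{w}(t))\to 0$; coupled with $\|\cv{w}'(t)\|\to 0$ we are exactly in the hypotheses of Proposition~\ref{prop:figualzero}, which yields $a_k(s,w) = 0$ for every $s\in\R$ and every $k$. Substituting into the definition of $W$ then gives $W(t,w) = \sum_k a_k(t,w)\,\Ad(\cv{x}_r(t))X_k(w) = 0$ for all $t$, so $w\in\mathcal{C}_W$, which is the final \emph{in particular} clause.
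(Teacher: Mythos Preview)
Your proposal is correct and follows essentially the same route as the paper: compute $\frac{\dd}{\dd t}V(\cv{w}(t))=\sum_k a_k^2$, obtain convergence of $V\circ\cv{w}$, apply Barbalat after bounding the second derivative, deduce $a_k(t,\cv{w}(t))\to 0$ and $\|\cv{w}'(t)\|\to 0$, and finish with Proposition~\ref{prop:figualzero}. The only cosmetic difference is that the paper devotes a separate proposition (Proposition~\ref{prop:bklim}) to an explicit formula for $\frac{\dd}{\dd t}a_k(t,\cv{w}(t))$ in order to check its boundedness, whereas you invoke directly the compactness of $(\R/T\Z)\times G$; both arguments are valid and lead to the same conclusion.
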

Its proof depends on some auxiliary results that we prove below. First of all, we must obtain a more convenient expression for the functions $a_k$ defined in~\eqref{eq:akdef}.
\begin{Lem} \label{lem:dV} For $(x,v) \in TG$ we have\footnote{Given $X \in \gr{g}$ the adjoint map $\ad(X): \gr{g} \rarr \gr{g}$ is defined by $Y \in \gr{g} \mapsto [X,Y] \in \gr{g}$. By means of the canonical isomorphism $\gr{g} \cong T_eG$ it makes perfect sense to write $\ad(v)$ for $v \in T_eG$ -- as we do often throughout the text -- which we regard as a linear map $T_eG \rarr T_e G$. In that sense, $\ad: T_eG \rarr \gl(T_eG)$ is precisely the differential of the adjoint map $\Ad: G \rarr \GL(T_eG)$ at $e \in G$~\cite[Proposition~1.91]{knapp_lgbi}.\label{footnote:adAd}}
  \begin{align}
    \dd V_x v &= \tr \left\{ \Ad(x) \cdot \ad ( \dd L_{x^{-1}} v)\right\}. \label{eq:dVexpl}
  \end{align}
  Also, for each $k \in \{1, \ldots, m\}$:
  \begin{align}
    a_k(t,w) &= \tr \left\{\Ad(w) \cdot \ad\left(\Ad\left(\cv{x}_r(t)\right)X_k\right) \right\} \label{eq:akexplic}
  \end{align}
  for $(t,w) \in \R \times G$.
  \begin{proof} We start by showing that
    \begin{align}
      \dd \Ad_x v &= \Ad(x) \cdot \ad( \dd L_{x^{-1}}v), \quad \forall (x,v) \in TG. \label{eq:derAdxqq}
    \end{align}
    Indeed, notice that $\dd L_{x^{-1}}v \in T_eG$, which we then identify with an element of $\gr{g}$, thus making sense of~\eqref{eq:derAdxqq}. We consider the map $F \dfn \Ad \circ L_x: G \rarr \GL(\gr{g})$: by the chain rule we have, on the one hand,
    \begin{align*}
      \dd \Ad_xv &= \dd F_e \dd L_{x^{-1}} v.
    \end{align*}
    On the other hand, we can write
    \begin{align*}
      F(y) = \Ad(x \cdot y) = \Ad(x) \cdot \Ad(y), \quad y \in G,
    \end{align*}
    i.e.~$F = \Lambda \circ \Ad$ where
    \begin{align*}
      \TR{\Lambda}{T}{\gl(\gr{g})}{\Ad(x) \cdot T}{\gl(\gr{g})}
    \end{align*}
    is a linear map: for $\xi \in T_eG$ we have again by the chain rule
    \begin{align*}
      \dd F_e \xi = \dd \Lambda_{\Ad(e)} \dd \Ad_e \xi = \Lambda (\ad(\xi)) = \Ad(x) \cdot \ad(\xi)
    \end{align*}
    which for $\xi \dfn \dd L_{x^{-1}}v$ proves~(\ref{eq:derAdxqq}) thanks to our previous conclusions.

    Now, recalling that the map $\tr: \gl(\gr{g}) \rarr \R$ is linear and by definition $V = \tr \circ \Ad$, identity~\eqref{eq:dVexpl} follows immediately from~(\ref{eq:derAdxqq}) after a third application of the chain rule.

    To conclude, it follows from the definition of $a_k$ and from~(\ref{eq:dVexpl}) that
    \begin{align*}
      a_k(t,w) &= \dd V(\Ad(\cv{x}_r)X_k(w))\\
      &= \tr \left\{\Ad(w) \cdot \ad\left(\dd L_{w^{-1}}\Ad(\cv{x}_r)X_k(w)\right)\right\}\\
      &= \tr \left\{\Ad(w) \cdot \ad\left(\Ad(\cv{x}_r)X_k(e)\right)\right\}\\
      &= \tr \left\{\Ad(w) \cdot \ad\left(\Ad(\cv{x}_r)X_k\right)\right\}
    \end{align*}
    where we used that $\Ad\left(\cv{x}_r(t)\right)X_k$ is left-invariant for all $t \in \R$.
  \end{proof}
\end{Lem}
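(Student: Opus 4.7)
My plan is to reduce everything to computing the differential of $\Ad: G \to \GL(\gr{g})$ at an arbitrary point $x \in G$. Since $V = \tr \circ \Ad$ and $\tr$ is linear, the chain rule immediately gives $\dd V_x v = \tr(\dd \Ad_x v)$, so the first identity~\eqref{eq:dVexpl} reduces to the claim
$$
  \dd \Ad_x v = \Ad(x) \cdot \ad(\dd L_{x^{-1}} v), \quad \forall (x,v) \in TG,
$$
which is the real content of the lemma.

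To prove this intermediate identity I would exploit the fact that $\Ad$ is a group homomorphism, so the composite $F \dfn \Ad \circ L_x : G \to \GL(\gr{g})$ factors in two useful ways. On the one hand, by the chain rule $\dd F_e = \dd \Ad_x \circ \dd(L_x)_e$; writing $\xi \dfn \dd L_{x^{-1}} v \in T_eG$ one has $\dd(L_x)_e \xi = v$, hence $\dd F_e \xi = \dd \Ad_x v$. On the other hand, $F(y) = \Ad(x) \cdot \Ad(y)$, so $F = \Lambda \circ \Ad$ where $\Lambda : \gl(\gr{g}) \to \gl(\gr{g})$, $T \mapsto \Ad(x) \cdot T$, is linear; its differential at any point equals $\Lambda$ itself, so again by the chain rule $\dd F_e = \Lambda \circ \dd \Ad_e = \Lambda \circ \ad$, using the identification $\dd \Ad_e = \ad$ recorded in footnote~\ref{footnote:adAd}. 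Evaluating at $\xi$ yields $\dd F_e \xi = \Ad(x) \cdot \ad(\xi)$, and equating the two expressions produces the desired formula. Applying $\tr$ and one more chain rule then delivers~\eqref{eq:dVexpl}.

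For~\eqref{eq:akexplic}, I would just substitute $v \dfn \Ad(\cv{x}_r(t)) X_k(w) \in T_wG$ into~\eqref{eq:dVexpl}. The decisive observation is that $\Ad(\cv{x}_r(t)) X_k$ is an element of $\gr{g}$, hence a left-invariant vector field; therefore $\dd L_{w^{-1}}$ sends its value at $w$ back to its value at $e$, which under the canonical identification $\gr{g} \cong T_eG$ is simply $\Ad(\cv{x}_r(t)) X_k$ itself. Feeding this into~\eqref{eq:dVexpl} gives the claim directly.

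I do not foresee any substantial obstacle here; the statement is essentially the chain rule applied twice to $\Ad \circ L_x$, packaged together with the left-invariance of the vector fields $\Ad(\cv{x}_r(t)) X_k$. The only care required is to keep track of where each differential is evaluated, and to remember to pass from $T_eG$ to $\gr{g}$ and back whenever necessary so that $\ad$ can be applied.
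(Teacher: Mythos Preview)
Your proposal is correct and follows essentially the same route as the paper's proof: both establish the intermediate identity $\dd \Ad_x v = \Ad(x)\cdot\ad(\dd L_{x^{-1}}v)$ by factoring $F = \Ad \circ L_x$ as $\Lambda \circ \Ad$ and applying the chain rule twice, then deduce~\eqref{eq:dVexpl} by linearity of $\tr$ and~\eqref{eq:akexplic} by left-invariance of $\Ad(\cv{x}_r(t))X_k$. There is nothing to add.
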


We can now elucidate a couple of questions raised by Proposition~\ref{prop:elocalattrac}. 
\begin{Cor} \label{cor:einEW} Every\footnote{$Z(G)$: the center of $G$ i.e.~the subgroup of all $x \in G$ such that $x \cdot y = y \cdot x$ for every $y \in G$. When $G$ is semisimple $Z(G)$ is discrete.} $w \in Z(G)$ is a critical point of the auxiliary vector field $W$. However, if the identity element is a local attractor of $W$ then $G$ must be semisimple.
  \begin{proof} Since $Z(G) = \ker \Ad$ we have $\Ad(w) = \text{id}_{\gr{g}}$, and then for each $k \in \{1, \ldots, m\}$
    \begin{align*}
      a_k(t,w) = \tr \left\{\Ad(w) \cdot \ad\left(\Ad(\cv{x}_r)X_k\right)\right\} = \tr \ad\left(\Ad(\cv{x}_r)X_k\right) = 0
    \end{align*}
    for every $t \in \R$ since $\ad(X)$ is traceless\footnote{See footnote~\ref{footnote:aiip}.} for all $X \in \gr{g}$. By definition of $W$ we have then $W(t,w) = 0$ for all $t \in \R$ i.e.~$w$ is a critical point.

    In particular $e \in \mathcal{C}_W$. If $G$ were not semisimple then $Z(G)$ would be a Lie subgroup of $G$ of positive dimension, hence any neighborhood of $e$ would contain infinitely many points in $Z(G)$. Since $Z(G) \sset \mathcal{C}_W$ this proves that $e$ would not be an isolated point of $\mathcal{C}_W$, even less a local attractor.
  \end{proof}
\end{Cor}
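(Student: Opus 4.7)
The plan is to combine the explicit expression~\eqref{eq:akexplic} for the coefficients $a_k$ with two standard Lie-theoretic facts: that $Z(G) = \ker \Ad$, and that $\tr \ad(X) = 0$ for every $X \in \gr{g}$ when $G$ is compact. The latter follows because $\Ad(G)$ then lies in a compact subgroup of $\GL(\gr{g})$, so $|\det \Ad|$ is identically $1$; differentiating at $e$ gives $\tr \ad = 0$.

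For the first claim I would take $w \in Z(G)$, substitute $\Ad(w) = \mathrm{id}_{\gr{g}}$ into~\eqref{eq:akexplic} to get $a_k(t,w) = \tr \ad(\Ad(\cv{x}_r(t)) X_k)$, and conclude this vanishes for every $t$ and $k$ thanks to the tracelessness of $\ad$. The definition of $W$ then immediately yields $W(t,w) = 0$ for every $t \in \R$, i.e. $w \in \mathcal{C}_W$.

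For the second claim I would argue by contraposition. If $G$ is not semisimple, then $\gr{g}$ has non-trivial center, so the connected component of $Z(G)$ containing $e$ is a Lie subgroup of positive dimension, whence every neighborhood of $e$ contains points of $Z(G) \setminus \{e\}$. By the first part each such point $w$ is a critical point of $W$, and consequently the integral curve of $W$ starting at $w$ at any time $t_0$ is the constant curve $t \mapsto w$, which never converges to $e$. This precludes $e$ from being a local attractor, completing the contrapositive.

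No serious obstacle is expected: the only care required is in invoking the correct Lie-theoretic inputs, namely the identification $Z(G) = \ker \Ad$, the tracelessness of $\ad$ for compact $G$, and the equivalence between semisimplicity and triviality of the center of $\gr{g}$ (equivalently, discreteness of $Z(G)$).
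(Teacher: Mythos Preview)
Your proposal is correct and follows essentially the same approach as the paper: both parts invoke $Z(G)=\ker\Ad$ together with~\eqref{eq:akexplic} and the tracelessness of $\ad$ for the first claim, and for the second claim both observe that non-semisimplicity forces $Z(G)$ to have positive dimension, so $e$ is not isolated in $\mathcal{C}_W$. Your version is slightly more explicit in two places (you justify $\tr\ad=0$ via $\det\Ad$, and you spell out that the constant integral curve at a nearby central point prevents attraction), but these are elaborations rather than genuine differences in strategy.
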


The next technical remark will also be needed in Section~\ref{sec:existence}. We define
\begin{align}
  X_r &\dfn \sum_{j = 1}^m u_j^rX_j \label{eq:Xr}
\end{align}
where $u_1^r, \ldots, u_m^r$ are the controls of our reference trajectory of system~(\ref{eq:lids}). We will consider $X_r$ both as a time-dependent vector field on $G$ -- of which $\cv{x}_r$ is an integral curve -- and as a smooth curve in $\gr{g}$.
\begin{Lem} \label{lem:derlambda} Let $\Lambda^0:\R \rarr \gr{g}$ be any smooth curve and define $\lambda:\R \rarr \gr{g}$ by
  \begin{align*}
    \lambda &\dfn \Ad(\cv{x}_r)\Lambda^0.
  \end{align*}
  Then its $n$-th derivative is given by
  \begin{align*}
    \lambda^{(n)} &= \Ad(\cv{x}_r)\Lambda^n
  \end{align*}
  where $\Lambda^n:\R \rarr \gr{g}$ is defined inductively by
  \begin{align*}
    \Lambda^{n + 1} &\dfn (\Lambda^n)' + \ad(X_r)\Lambda^n, \quad n \in \N.
  \end{align*}
  \begin{proof} Using the identity $\Ad(\cv{x}_r)' = \Ad(\cv{x}_r) \cdot \ad(X_r)$, which in turn follows easily from~\eqref{eq:derAdxqq}, we have
    \begin{align*}
      \left(\Ad(\cv{x}_r)\Lambda^n\right)' = \Ad(\cv{x}_r)'\Lambda^n + \Ad(\cv{x}_r)(\Lambda^n)' = \Ad(\cv{x}_r)\ad(X_r)\Lambda^n + \Ad(\cv{x}_r)(\Lambda^n)' = \Ad(\cv{x}_r)\Lambda^{n + 1}.
    \end{align*}
  \end{proof}
\end{Lem}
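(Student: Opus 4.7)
The plan is to proceed by induction on $n \in \N$. The base case $n = 0$ is nothing but the defining relation $\lambda = \Ad(\cv{x}_r) \Lambda^0$, so the entire content lies in the inductive step.

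Assuming $\lambda^{(n)} = \Ad(\cv{x}_r) \Lambda^n$, I would differentiate once more using the Leibniz rule applied to the bilinear pairing $\gl(\gr{g}) \times \gr{g} \rarr \gr{g}$, $(T,Y) \mapsto T \cdot Y$, obtaining
\[
\lambda^{(n+1)} = (\Ad \circ \cv{x}_r)' \Lambda^n + \Ad(\cv{x}_r) (\Lambda^n)'.
\]
The key observation is then the identity $(\Ad \circ \cv{x}_r)' = \Ad(\cv{x}_r) \cdot \ad(X_r)$; once this is in hand, one immediately obtains $\lambda^{(n+1)} = \Ad(\cv{x}_r)\bigl[(\Lambda^n)' + \ad(X_r) \Lambda^n\bigr] = \Ad(\cv{x}_r) \Lambda^{n+1}$ by the very definition of $\Lambda^{n+1}$, closing the induction.

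To justify the identity I would invoke \eqref{eq:derAdxqq} from Lemma~\ref{lem:dV}, which asserts $\dd \Ad_x v = \Ad(x) \cdot \ad(\dd L_{x^{-1}} v)$ for every $(x,v) \in TG$. Applying this with $x = \cv{x}_r(t)$ and $v = \cv{x}_r'(t)$ via the chain rule gives $(\Ad \circ \cv{x}_r)'(t) = \Ad(\cv{x}_r(t)) \cdot \ad\bigl(\dd L_{\cv{x}_r(t)^{-1}} \cv{x}_r'(t)\bigr)$. Now $\cv{x}_r$ is the integral curve of~\eqref{eq:lids} corresponding to the controls $u_1^r, \ldots, u_m^r$, so $\cv{x}_r'(t) = \sum_{k} u_k^r(t) X_k(\cv{x}_r(t)) = \dd L_{\cv{x}_r(t)} X_r(t)$ by left-invariance of each $X_k$ and by the definition~\eqref{eq:Xr} of $X_r$; consequently $\dd L_{\cv{x}_r(t)^{-1}} \cv{x}_r'(t) = X_r(t) \in T_e G \cong \gr{g}$, and the claimed formula drops out.

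There is no genuine obstacle here: the lemma is essentially a bookkeeping device combining the Leibniz rule with the formula for differentiating $\Ad$ along a curve. The only minor point requiring care is the dual role of $X_r$ — simultaneously a time-dependent left-invariant vector field on $G$ (so that $\cv{x}_r$ is its integral curve) and a smooth curve in $\gr{g}$ (the object fed into $\ad$) — but this ambiguity is already acknowledged in the paragraph introducing~\eqref{eq:Xr}.
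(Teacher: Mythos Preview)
Your proposal is correct and follows essentially the same approach as the paper's proof: induction on $n$, the Leibniz rule for the bilinear pairing, and the identity $(\Ad \circ \cv{x}_r)' = \Ad(\cv{x}_r) \cdot \ad(X_r)$ derived from~\eqref{eq:derAdxqq}. Your version is in fact slightly more explicit than the paper's in justifying this last identity via the integral-curve property of $\cv{x}_r$, but the argument is the same.
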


\begin{Prop} \label{prop:bklim} Let $\cv{w}: \R \rarr G$ be an integral curve of $W$. For each $k \in \{1, \ldots, m\}$ the function $b_k:\R \rarr \R$ defined by
  \begin{align*}
    b_k(t) &\dfn \frac{\dd}{\dd t} a_k \left(t,\cv{w}(t)\right)
  \end{align*}
  is bounded.
  \begin{proof} We shall write down an explicit expression for $b_k$, which boils down to computing the partial derivatives of $a_k$ since
    \begin{align*}
      b_k &= \frac{\del a_k}{\del t} + \frac{\del a_k}{\del w} \cv{w}'.
    \end{align*}
    
    First, since $\Ad(x):\gr{g} \rarr \gr{g}$ is a Lie algebra homomorphism for each $x \in G$, it follows easily that
    \begin{align}
      \ad\left(\Ad(x)X\right) &= \Ad(x) \cdot \ad(X) \cdot \Ad(x)^{-1} \label{eq:adAdxX}
    \end{align}
    for every $X \in \gr{g}$ (just apply both sides on an arbitrary $Y \in \gr{g}$). Now, by~\eqref{eq:akexplic} we have that $a_k(t,w) = \tr\left\{\Ad(w) \cdot \ad\left(\lambda \right)\right\}$ where $\lambda \dfn \Ad(\cv{x}_r)X_k$, hence
    \begin{align*}
      \frac{\del a_k}{\del t} = \tr \left\{\Ad(w) \cdot \ad\left(\lambda'\right) \right\} = \tr \left\{\Ad(w) \cdot \Ad(\cv{x}_r) \cdot \ad\left( \ad(X_r) X_k\right) \cdot \Ad(\cv{x}_r)^{-1} \right\}
    \end{align*}
    by~\eqref{eq:adAdxX}, since $\lambda' = \Ad(\cv{x}_r)\ad(X_r)X_k$ thanks to Lemma~\ref{lem:derlambda}.
    
    Moreover, using~\eqref{eq:derAdxqq} and taking into account that $\cv{w}$ is an integral curve of $W$
    \begin{align*}
      \dd \Ad_{\cv{w}}\cv{w}' &= \Ad(\cv{w}) \cdot \ad\left(\dd L_{\cv{w}^{-1}}\cv{w}'\right)\\
      &= \Ad(\cv{w}) \cdot \ad\left(\dd L_{\cv{w}^{-1}}\sum_{j = 1}^m a_j(t, \cv{w})\Ad(\cv{x}_r)X_j(\cv{w})\right)\\
      &= \Ad(\cv{w}) \cdot \sum_{j = 1}^m a_j(t, \cv{w})\ \ad\left(\Ad(\cv{x}_r)X_j\right)\\
    \end{align*}
    from which it follows that
    \begin{align*}
      \frac{\del a_k}{\del w} \cv{w}' &= \tr \left\{\dd \Ad_{\cv{w}}\cv{w}' \cdot \ad\left(\Ad(\cv{x}_r)X_k\right)\right\}\\
      &= \tr \left\{ \Ad(\cv{w}) \cdot \sum_{j = 1}^m a_j(t, \cv{w})\ \ad\left(\Ad(\cv{x}_r)X_j\right) \cdot \ad\left(\Ad(\cv{x}_r)X_k\right)\right\}\\
      &= \tr \left\{ \Ad(\cv{w}) \cdot \Ad(\cv{x}_r) \cdot \sum_{j = 1}^m a_j(t, \cv{w}) \ \ad(X_j) \cdot \ad(X_k) \cdot \Ad(\cv{x}_r)^{-1}\right\}
    \end{align*}
    thanks again to a double application of~\eqref{eq:adAdxX}
    
    Summing both derivatives evaluated at $(t, \cv{w}(t))$, we conclude that
    \begin{align*}
      b_k &= \tr \left\{\Ad(\cv{w}) \cdot \Ad(\cv{x}_r) \cdot B_k \cdot \Ad(\cv{x}_r)^{-1}\right\}
    \end{align*}
    where $B_k:\R \rarr \gl(\gr{g})$ is defined given by
    \begin{align*}
      B_k &\dfn \ad(\ad(X_r)X_k) + \sum_{j = 1}^m a_j(t, \cv{w})\ad\left(X_j\right) \cdot \ad(X_k).
    \end{align*}
    Denoting by $\| \cdot \|$ any norm in $\gl(\gr{g})$, it follows from the compactness of $G$ the existence of $M > 0$ such that $\| \Ad(x)\| \leq M$ for every $x \in G$, hence for every $t \in \R$ we have
    \begin{align*}
      |b_k(t)| = \left|\tr \left\{\Ad(\cv{w}) \cdot \Ad(\cv{x}_r) \cdot B_k(t) \cdot \Ad(\cv{x}_r)^{-1} \right\}\right| \leq M^3 \|\tr\| \|B_k(t)\|
    \end{align*}
    where $\|\tr\|$ stands for the norm of the linear functional $\tr: \gl(\gr{g}) \rarr \R$: in order to finish the proof, it suffices to show that $B_k$ is bounded. But
    \begin{align*}
      \|B_k(t)\| &\leq \|\ad(\ad(X_r(t))X_k)\| + \sum_{j = 1}^m |a_j(t, \cv{w})| \ \|\ad\left(X_j\right) \cdot \ad(X_k)\|
    \end{align*}
    and while the first term is clearly bounded for the map
    \begin{align*}
      t \in \R &\longmapsto \ad(\ad(X_r(t))X_k) \in \gl(\gr{g}),
    \end{align*}
    is $T$-periodic, the second term is bounded because $a_j:\R \times G \rarr \R$ is $T$-periodic for each $j \in \{1, \ldots, n\}$ and hence $a_j(\R \times G) = a_j([0,T] \times G)$ is a compact set.
  \end{proof}
\end{Prop}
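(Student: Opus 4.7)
The plan is to write $b_k(t)$ explicitly using the chain rule applied to the formula~\eqref{eq:akexplic}, so that
\[
b_k(t) = \frac{\partial a_k}{\partial t}(t,\cv{w}(t)) + \dd(a_k(t, \cdot))_{\cv{w}(t)} \cv{w}'(t),
\]
and then to exhibit each summand as the trace of a composition of linear operators whose norm I can control via compactness of $G$ and $T$-periodicity of the reference controls.

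For the time derivative I would set $\lambda(t) \dfn \Ad(\cv{x}_r(t)) X_k$, apply Lemma~\ref{lem:derlambda} with $\Lambda^0 \equiv X_k$ to obtain $\lambda'(t) = \Ad(\cv{x}_r(t)) \ad(X_r(t)) X_k$, and combine this with the commutation identity $\ad(\Ad(x)Y) = \Ad(x) \cdot \ad(Y) \cdot \Ad(x)^{-1}$, which holds because $\Ad(x)$ is a Lie algebra automorphism. Differentiating $a_k(t,w) = \tr\{\Ad(w) \cdot \ad(\lambda(t))\}$ under the trace then yields
\[
\frac{\partial a_k}{\partial t}(t,w) = \tr\{\Ad(w) \cdot \Ad(\cv{x}_r(t)) \cdot \ad(\ad(X_r(t))X_k) \cdot \Ad(\cv{x}_r(t))^{-1}\}.
\]

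For the spatial derivative I would differentiate $w \mapsto \Ad(w)$ via~\eqref{eq:derAdxqq} and plug in $\cv{w}'(t) = \sum_j a_j(t,\cv{w}(t)) \Ad(\cv{x}_r(t)) X_j(\cv{w}(t))$. Left-translating each summand back to the identity removes the evaluation at $\cv{w}(t)$ since each $\Ad(\cv{x}_r(t)) X_j$ is left-invariant, and a second application of the commutation identity produces
\[
\dd(a_k(t,\cdot))_{\cv{w}(t)} \cv{w}'(t) = \tr\!\left\{\Ad(\cv{w}) \Ad(\cv{x}_r) \sum_{j=1}^m a_j(t,\cv{w}) \ad(X_j) \ad(X_k) \Ad(\cv{x}_r)^{-1}\right\}.
\]
Adding both contributions yields $b_k(t) = \tr\{\Ad(\cv{w}(t)) \Ad(\cv{x}_r(t)) B_k(t) \Ad(\cv{x}_r(t))^{-1}\}$, where $B_k(t) \in \gl(\gr{g})$ collects the two $\ad$-terms together with the coefficients $a_j(t,\cv{w}(t))$.

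At that point the estimate becomes routine: compactness of $G$ yields a uniform bound $\|\Ad(x)\| \leq M$ valid for every $x \in G$, so that $|b_k(t)| \leq \|\tr\| M^3 \|B_k(t)\|$ where $\|\tr\|$ is the operator norm of $\tr: \gl(\gr{g}) \to \R$. The first summand of $B_k$ is continuous and $T$-periodic in $t$ (the reference controls $u_j^r$ being $T$-periodic), hence bounded; each coefficient $a_j(t,\cv{w}(t))$ lies in $a_j(\R \times G) = a_j([0,T] \times G)$, the continuous image of a compact set, hence also bounded. I anticipate the main obstacle to be the bookkeeping: one has to consistently left-translate tangent vectors to the identity, and systematically pull $\Ad(\cv{x}_r)$ out of $\ad$ via the commutation identity, so that everything ends up conjugated by $\Ad(\cv{w}) \Ad(\cv{x}_r)$ and can be estimated uniformly.
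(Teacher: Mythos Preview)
Your proposal is correct and follows essentially the same route as the paper: the same chain-rule split, the same use of Lemma~\ref{lem:derlambda} and of~\eqref{eq:derAdxqq}, the same conjugation identity $\ad(\Ad(x)Y)=\Ad(x)\ad(Y)\Ad(x)^{-1}$, leading to the same expression $b_k=\tr\{\Ad(\cv{w})\Ad(\cv{x}_r)B_k\Ad(\cv{x}_r)^{-1}\}$ and the same compactness/periodicity bounds.
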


\begin{Cor} \label{cor:akconv0} If $\cv{w}:\R \rarr G$ is an integral curve of $W$ then
  \begin{align}
    \lim_{t \rarr \infty} a_k\left(t,\cv{w}(t)\right) &= 0, \quad \forall k \in \{1, \ldots, m\}. \label{eq:akconv0}
  \end{align}
  In particular
  \begin{align*}
    \lim_{t \to \infty} \|\cv{w}'(t)\| &= 0
  \end{align*}
  where $\| \cdot \|$ is the norm associated to any left-invariant Riemannian metric on $G$.
  \begin{proof}  Let $\alpha \dfn V \circ \cv{w}$ where $V$ is our Lyapunov-like function~\eqref{eq:Vdef}. Its first derivative is
    \begin{align*}
      \alpha' = \dd V(\cv{w}') = \dd V \left(\sum_{k = 1}^m a_k(t, \cv{w}) \Ad(\cv{x}_r)X_k(\cv{w})\right) = \sum_{k = 1}^m a_k(t, \cv{w}) \dd V(\Ad(\cv{x}_r)X_k(\cv{w})) = \sum_{k = 1}^m a_k(t, \cv{w})^2,
    \end{align*}
    by definition of $a_k$~\eqref{eq:akdef}, and thus non-negative. Differentiating once again yields
    \begin{align*}
      \alpha'' =  2\sum_{k = 1}^m a_k(t, \cv{w}) \frac{\dd}{\dd t} a_k(t, \cv{w}) = 2\sum_{k = 1}^m a_k(t, \cv{w}) b_k(t)
    \end{align*}
    with $b_k$ is as in Proposition~\ref{prop:bklim}, hence bounded, which implies boundedness of $\alpha''$. In turn, this ensures that $\alpha'$ is uniformly continuous. Now
    \begin{align*}
      \dd V(W(t,w)) &= \sum_{k = 1}^m a_k(t,w)^2 \geq 0, \quad \forall (t,w) \in \R \times G
    \end{align*}
    so $V$ is non-decreasing along $W$, thus thanks to Corollary~\ref{cor:ndecimpllim} we have
    \begin{align*}
      \lim_{t \to \infty} V(\cv{w}(t)) &= V(w)
    \end{align*}
    where $w \in \omega_W(0, \cv{w}(0))$ is arbitrary (recall that the latter set is never empty). We have proved that
    \begin{align*}
      \lim_{t \to \infty} \int_0^t \alpha'(s) \dd s = \lim_{t \to \infty} \alpha(t) - \alpha(0) = V(w) - V(\cv{w}(0))
    \end{align*}
    which brings us into position to apply Barbalat's Lemma~\cite[Lemma~8.2]{khalil} to $\alpha'$ and conclude that
    \begin{align*}
      \lim_{t \to \infty} \alpha'(t) &= 0
    \end{align*}
    which clearly proves~\eqref{eq:akconv0} thanks to our previous computations. Our second statement now follows:
     \begin{align*}
       \|\cv{w}'(t)\| = \left\|\sum_{k = 1}^m a_k(t,\cv{w}) \Ad(\cv{x}_r)X_k(\cv{w}) \right\| \leq \sum_{k = 1}^m |a_k(t,\cv{w})| \  \|\Ad(\cv{x}_r)X_k\| \leq M \sum_{k = 1}^m |a_k(t,\cv{w})| \  \|X_k\| \longrightarrow 0.
     \end{align*}
  \end{proof}
\end{Cor}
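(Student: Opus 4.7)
The plan is to exploit the Lyapunov-like function $V$ from~\eqref{eq:Vdef}. Setting $\alpha \dfn V \circ \cv{w}$ and expanding $\alpha'(t) = \dd V(\cv{w}'(t))$ along the integral curve equation $\cv{w}' = W(t, \cv{w}) = \sum_k a_k(t,\cv{w}) \Ad(\cv{x}_r) X_k(\cv{w})$, the definition~\eqref{eq:akdef} of $a_k$ itself gives $\alpha' = \sum_{k = 1}^m a_k(t, \cv{w})^2$. This single identity does two jobs for me: it shows $\alpha' \geq 0$ (so $V$ is non-decreasing along $W$, and Corollary~\ref{cor:ndecimpllim} applies), and it reduces the desired conclusion~\eqref{eq:akconv0} to just $\alpha'(t) \to 0$, because the $m$ terms are non-negative.

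To extract $\alpha'(t) \to 0$ I plan to apply Barbalat's Lemma to $\alpha'$. This needs two ingredients. First, that $\int_0^\infty \alpha'(s)\,\dd s$ is finite: by compactness of $G$ the set $\omega_W(0, \cv{w}(0))$ is non-empty, so picking any $w$ there, Corollary~\ref{cor:ndecimpllim} yields $\alpha(t) = V(\cv{w}(t)) \to V(w)$ as $t \to \infty$, so $\int_0^t \alpha'(s) \dd s = \alpha(t) - \alpha(0)$ converges. Second, that $\alpha'$ is uniformly continuous, which I would obtain from boundedness of $\alpha''$. Differentiating once more,
\[
\alpha'' = 2 \sum_{k=1}^m a_k(t, \cv{w}) \, b_k(t),
\]
where $b_k(t) = \tfrac{\dd}{\dd t} a_k(t, \cv{w}(t))$ is bounded by Proposition~\ref{prop:bklim}, and $a_k(t, \cv{w}(t))$ is bounded because $a_k:\R \times G \rarr \R$ is continuous and $T$-periodic in $t$ (so its range coincides with the compact $a_k([0,T] \times G)$). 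Barbalat then yields $\alpha'(t) \to 0$, hence each $a_k(t, \cv{w}(t)) \to 0$.

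For the ``in particular'' clause, I would simply estimate
\[
\|\cv{w}'(t)\| \leq \sum_{k=1}^m |a_k(t,\cv{w}(t))| \cdot \|\Ad(\cv{x}_r(t)) X_k(\cv{w}(t))\|.
\]
Left-invariance of the Riemannian norm turns $\|\Ad(\cv{x}_r) X_k(\cv{w})\|$ into $\|\Ad(\cv{x}_r) X_k\|_e$, and compactness of $G$ gives a uniform bound $\|\Ad(x)\| \leq M$ on the adjoint representation, so each factor $\|\Ad(\cv{x}_r) X_k\|$ is bounded by $M \|X_k\|$. Combined with the first statement, this forces $\|\cv{w}'(t)\| \to 0$.

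The only step with any subtlety is verifying the uniform continuity hypothesis of Barbalat, and that has essentially been prepared in advance by Proposition~\ref{prop:bklim}; everything else is direct bookkeeping. No new ideas beyond what has already been staged appear to be needed.
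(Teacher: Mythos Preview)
Your proposal is correct and follows essentially the same route as the paper's own proof: compute $\alpha' = \sum_k a_k^2$, bound $\alpha''$ via Proposition~\ref{prop:bklim} to get uniform continuity, invoke Corollary~\ref{cor:ndecimpllim} for convergence of $\alpha$, and apply Barbalat's Lemma. Your explicit remark that $a_k(t,\cv{w}(t))$ is itself bounded (by $T$-periodicity and compactness of $G$) is a detail the paper leaves implicit but is indeed needed for the boundedness of $\alpha''$.
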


\begin{proof}[Proof of Theorem~\ref{thm:akvale0emOmega}] Let $\| \cdot \|$ stand for the norm associated to some left-invariant metric on $G$. For $\cv{w}:\R \rarr G$ the unique integral curve of $W$ satisfying $\cv{w}(t_0) = w_0$ we have, thanks to Corollary~\ref{cor:akconv0}, that $\|\cv{w}'(t)\| \to 0$ as $t \to \infty$. Moreover, the function $f:\R \times G \rarr \R$ defined by $f(t,w) \dfn a_k(t,w)$ is $T$-periodic and satisfies, again by Corollary~\ref{cor:akconv0},
  \begin{align*}
    \lim_{t \rarr \infty} f(t, \cv{w}(t)) = \lim_{t \rarr \infty} a_k(t, \cv{w}(t)) = 0.
  \end{align*}
  But since $w \in \Omega_W(t_0,w_0)$ there exists a sequence $\{t_n\}_{n \in \N}$ increasing to infinity such that $\cv{w}(t_n) \to w$ as $n \to \infty$. The conclusion follows from Proposition~\ref{prop:figualzero}.
\end{proof}
\section{Regular trajectories} \label{sec:regtraj}

Up to this point, all the results obtained are valid for arbitrary periodic reference trajectories of~\eqref{eq:lids}. In this section we introduce a special class of trajectories such that the set of critical points of their associated auxiliary vector fields $W$ admit a nice algebraic description: it coincides with the set of critical points of our Lyapunov-like function $V$. This characterization allows us show that the identity element is a local attractor for $W$ provided $G$ is semisimple, hence solving the PTTP in a neighborhood of the target state $x_\infty$ by Proposition~\ref{prop:elocalattrac}.

\begin{Def} \label{def:traj_regular} A trajectory $(\cv{x}, u_1, \ldots, u_m)$ of~\eqref{eq:lids} (not necessarily periodic) is said to be \emph{regular} if
  \begin{align*}
    \Span \left\{\Ad\left(\cv{x}(t)\right)X_k \st t \in \R, \ 1 \leq k \leq m\right\} &= \gr{g}.
  \end{align*}
\end{Def}
In Section~\ref{sec:existence} we prove the existence of regular periodic trajectories through any initial state $x_0 \in G$ under some extra assumptions on system~\eqref{eq:lids}. Theorem~\ref{thm:akvale0emOmega} admits the following:
\begin{Cor} \label{cor:characCW} Assume $(\cv{x}_r, u_1^r, \ldots, u_m^r)$ is a regular periodic trajectory of system~\eqref{eq:lids} and let $W$ be its auxiliary vector field. If $w \in G$ is any $\omega$-limit point of $W$ then
  \begin{align*}
    \tr \left\{\Ad(w) \cdot \ad(X)\right\} &= 0, \quad \forall X \in \gr{g}.
  \end{align*}
  Or, by Lemma~\ref{lem:dV}: $\dd V X(w) = 0$ for all $X \in \gr{g}$.
  \begin{proof} By Theorem~\ref{thm:akvale0emOmega} we have $a_k(t,w) = 0$ for every $t \in \R$ and $k \in \{1, \ldots, m\}$, so by Lemma~\ref{lem:dV}
    \begin{align*}
      \tr \left\{\Ad(w) \cdot \ad\left(\Ad\left(\cv{x}_r(t)\right)X_k\right)\right\} &= 0, \quad \forall t \in \R, \ \forall k \in \{1, \ldots, m\}.
    \end{align*}
    But since $(\cv{x}_r, u_1^r, \ldots, u_m^r)$ is regular the linear functional $\tr \left\{\Ad(w) \cdot \ad(\cdot)\right\}$ vanishes on $\gr{g}$.
  \end{proof}
\end{Cor}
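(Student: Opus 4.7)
The plan is a short chain: invoke Theorem~\ref{thm:akvale0emOmega}, unpack the resulting equalities using the explicit formula~\eqref{eq:akexplic}, and then use the regularity hypothesis as a spanning condition to promote a finite family of linear identities to an identity on all of $\gr{g}$.

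Concretely, I would first apply Theorem~\ref{thm:akvale0emOmega} to the $\omega$-limit point $w$ to obtain $a_k(t,w) = 0$ for every $t \in \R$ and every $k \in \{1, \ldots, m\}$. Substituting into~\eqref{eq:akexplic} this reads $\tr\{\Ad(w) \cdot \ad(\Ad(\cv{x}_r(t)) X_k)\} = 0$ for all such $(t,k)$. I would then introduce the linear functional $\phi : \gr{g} \rarr \R$ defined by $\phi(X) \dfn \tr\{\Ad(w) \cdot \ad(X)\}$ and observe that, by the above, it vanishes on each vector $\Ad(\cv{x}_r(t)) X_k$. The regularity assumption (Definition~\ref{def:traj_regular}) says precisely that such vectors span $\gr{g}$, so $\phi \equiv 0$ by linearity, which is the desired claim.

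For the alternate formulation $\dd V X(w) = 0$ mentioned in the statement, I would just apply~\eqref{eq:dVexpl} with $v \dfn X(w)$: since $X \in \gr{g}$ is left-invariant, $\dd L_{w^{-1}} X(w) = X(e)$, which under the canonical identification $\gr{g} \cong T_eG$ is $X$ itself; hence $\dd V_w X(w) = \tr\{\Ad(w) \cdot \ad(X)\}$, which is zero by what has just been proved.

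There is no real obstacle here: all the substantive analytic work was carried out in Theorem~\ref{thm:akvale0emOmega} (and the Corollary~\ref{cor:akconv0} that fed it). The role of this corollary is purely algebraic, converting the pointwise information $a_k(\cdot, w) \equiv 0$ into a basis-free statement via the spanning property granted by regularity; what one might wish to underline is that regularity is used here in exactly the minimal way required to make this conversion go through.
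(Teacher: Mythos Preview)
Your proposal is correct and follows essentially the same route as the paper's proof: invoke Theorem~\ref{thm:akvale0emOmega}, rewrite $a_k(t,w)=0$ via~\eqref{eq:akexplic}, and then use regularity as a spanning condition to conclude that the linear functional $X \mapsto \tr\{\Ad(w)\cdot\ad(X)\}$ vanishes on all of $\gr{g}$. Your extra paragraph unpacking the equivalence with $\dd V X(w)=0$ through~\eqref{eq:dVexpl} is a harmless elaboration of what the paper leaves implicit.
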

In particular, for a regular periodic trajectory $(\cv{x}_r, u_1^r, \ldots, u_m^r)$ the set of critical points of its auxiliary field $W$ can be expressed as
\begin{align}
  \mathcal{C}_W &= \left\{ w \in G \st \tr \left\{\Ad(w) \cdot \ad(X)\right\} = 0, \ \forall X \in \gr{g} \right\}. \label{eq:EWcomocpV} 
\end{align}
Indeed, if $w$ belongs to the set in the right-hand side of~\eqref{eq:EWcomocpV} then by~\eqref{eq:akexplic}) we have $a_k(t,w) = 0$ for all $t \in \R$ and every $k \in \{ 1, \ldots, m\}$, hence clearly $w \in \mathcal{C}_W$. We have thus equated, in this case:
\begin{itemize}
\item the set of critical points of the vector field $W$,
\item the set of $\omega$-limit points of $W$ and
\item the set of critical points of the Lyapunov-like function $V$.
\end{itemize}

The next result gathers some interesting consequences of~\eqref{eq:EWcomocpV}, which, however, we will not use in what follows.
\begin{Prop} \label{prop:algebEW} Let $(\cv{x}_r, u_1^r, \ldots, u_m^r)$ be a regular periodic trajectory of~\eqref{eq:lids} and $W$ be its auxiliary vector field. Then for $x, y \in G$:
  \begin{enumerate}
  \item $x \in \mathcal{C}_W \Longrightarrow y \cdot x \cdot y^{-1} \in \mathcal{C}_W$.
  \item $x \cdot y \in \mathcal{C}_W \Longrightarrow y \cdot x \in \mathcal{C}_W$.
  \item $x \in \mathcal{C}_W \Longrightarrow x^{-1} \in \mathcal{C}_W$.
  \end{enumerate}
  Moreover, if $G$ is semisimple and $\mathcal{C}_W$ is a subgroup of $G$ then $\mathcal{C}_W$ is finite. 
  \begin{proof} \hfill
    \begin{enumerate}
    \item For $X \in \gr{g}$ a simple computation shows that
      \begin{align*}
        \tr \left\{\Ad\left(y \cdot x \cdot y^{-1}\right) \cdot \ad(X)\right\} = \tr \left\{\Ad(x) \cdot \ad \left( \Ad(y)^{-1}X \right)\right\} = 0
      \end{align*}
      since $x$ belongs to $\mathcal{C}_W$, hence so does $y \cdot x \cdot y^{-1}$.
    \item Follows from the previous item since $y \cdot (x \cdot y) \cdot y^{-1} = y \cdot x$.
    \item Pick $Y_1, \ldots, Y_n$ an orthonormal basis of $\gr{g}$ w.r.t.~some $\Ad$-invariant\footnote{An inner product on $\gr{g}$ is said to be \emph{$\Ad$-invariant} if $\Ad(x)$ is orthogonal w.r.t.~it for every $x \in G$. Such an inner product always exists when $G$ is compact, and thanks to the relationship between the adjoint maps (see footnote~\ref{footnote:adAd}) one also has that $\ad(X)$ is skew-symmetric w.r.t.~it for every $X \in \gr{g}$~\cite[Proposition~4.24]{knapp_lgbi}. In particular, $\tr \ad(X) = 0$ for every $X \in \gr{g}$.\label{footnote:aiip}} inner product $\langle \cdot, \cdot \rangle$. Then:
      \begin{align*}
        \tr \left\{\Ad\left(x^{-1}\right) \cdot \ad(X)\right\} &= \sum_{j = 1}^n \left \langle \Ad\left(x^{-1}\right) \ad(X)Y_j, Y_j \right \rangle \\
        &= - \sum_{j = 1}^n \left \langle Y_j, \ad(X)\Ad(x)Y_j \right \rangle \\
        &= - \tr \left\{\Ad(x) \cdot \ad(X)\right\}
      \end{align*}
      which equals $0$ if $x \in \mathcal{C}_W$. Since $X \in \gr{g}$ is arbitrary, $x^{-1} \in \mathcal{C}_W$.
    \end{enumerate}

    As for the second part of the statement, since $\mathcal{C}_W$ is a closed set it is a Lie subgroup of $G$: let $\gr{h} \sset \gr{g}$ be its Lie algebra. Let $X \in \gr{h}$ and for each $Y \in \gr{g}$ define $f_Y:\R \rarr \R$ by\footnote{Here and below we denote by $\e^X \in G$ the exponential of $X \in \gr{g}$.}
    \begin{align*}
      f_Y(t) &\dfn \tr \left\{ \Ad \left(\e^{tX}\right) \cdot \ad(Y)\right\}, \quad t \in \R.
    \end{align*}
    Since then $\e^{tX} \in \mathcal{C}_W$ we have $f_Y(t) = 0$ for every $t \in \R$, and thus
    \begin{align*}
      f_Y'(t) &= \frac{\dd}{\dd t} \tr \left\{ \Ad \left(\e^{tX}\right) \cdot \ad(Y)\right\}\\
      &= \tr \left\{ \frac{\dd}{\dd t}\Ad \left(\e^{tX}\right) \cdot \ad(Y)\right\}\\
      &= \tr \left\{\Ad\left(\e^{tX}\right) \cdot \ad\left(\dd L_{\e^{-tX}} \frac{\dd}{\dd t}\e^{tX}\right) \cdot \ad(Y)\right\}\\
      &= \tr \left\{\Ad\left(\e^{tX}\right) \cdot \ad\left(\dd L_{\e^{-tX}} X\left(\e^{tX}\right)\right) \cdot \ad(Y)\right\}\\
      &= \tr \left\{\Ad\left(\e^{tX}\right) \cdot \ad\left(X\right) \cdot \ad(Y)\right\}
    \end{align*}
    also equals $0$ for all $t \in \R$, in particular for $t = 0$: we have thus proved that $\tr \left\{\ad\left(X\right) \cdot \ad(Y)\right\} = 0$ for every $Y \in \gr{g}$. But this is the Killing form of $\gr{g}$, which is non-degenerate since we are assuming $G$ semisimple\footnote{This is Cartan's Criterion for Semisimplicity~\cite[Theorem~1.45]{knapp_lgbi}: the Killing form of $\gr{g}$ is the bilinear form $B: \gr{g} \times \gr{g} \rarr \gr{g}$ defined by $B(X,Y) \dfn \tr \{ \ad(X) \cdot \ad(Y) \}$. It is always negative semidefinite, while non-degenerate precisely when $G$ is semisimple.}, from which we conclude that $X = 0$. Since $X \in \gr{h}$ is arbitrary we have $\gr{h} = \{0\}$ i.e.~$\mathcal{C}_W$ is a discrete subgroup of $G$. Since $G$ is compact and $\mathcal{C}_W$ is closed the latter must be finite.
  \end{proof}
\end{Prop}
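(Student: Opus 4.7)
The plan is to exploit the characterization~\eqref{eq:EWcomocpV}, which reduces the three items to trace identities involving $\Ad$ and $\ad$. For item (1), I would combine the fact that $\Ad$ is a group homomorphism with cyclicity of the trace to write
\begin{align*}
  \tr \{ \Ad(y \cdot x \cdot y^{-1}) \cdot \ad(X) \} &= \tr \{ \Ad(x) \cdot \Ad(y)^{-1} \cdot \ad(X) \cdot \Ad(y) \},
\end{align*}
and then recognize the inner factor $\Ad(y)^{-1} \cdot \ad(X) \cdot \Ad(y)$ as $\ad(\Ad(y)^{-1} X)$ via identity~\eqref{eq:adAdxX} applied at $y^{-1}$. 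Since $\Ad(y)^{-1} X \in \gr{g}$ and $x \in \mathcal{C}_W$, the right-hand side vanishes.

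Item (2) is then immediate from (1), since $y \cdot x = y \cdot (x \cdot y) \cdot y^{-1}$. For (3), I would fix an $\Ad$-invariant inner product $\langle \cdot, \cdot \rangle$ on $\gr{g}$, which exists by compactness of $G$, and with respect to which $\Ad(x^{-1})$ is the transpose of $\Ad(x)$ while $\ad(X)$ is skew-symmetric (cf.~footnote~\ref{footnote:aiip}). Expanding $\tr \{\Ad(x^{-1}) \cdot \ad(X)\}$ in an orthonormal basis, shifting $\Ad(x^{-1})$ to the other factor of each inner product, and invoking skew-symmetry of $\ad(X)$ produces $-\tr\{\Ad(x) \cdot \ad(X)\}$, which is $0$ when $x \in \mathcal{C}_W$.

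For the final claim I would first note that $\mathcal{C}_W$ is closed, being the zero locus of the continuous functions $x \mapsto \tr\{\Ad(x)\cdot\ad(X)\}$, so if it happens to be a subgroup then Cartan's closed-subgroup theorem promotes it to a Lie subgroup; call its Lie algebra $\gr{h} \sset \gr{g}$. The conceptual heart of the argument is this: for each $X \in \gr{h}$ the curve $t \mapsto \tr\{\Ad(\e^{tX}) \cdot \ad(Y)\}$ vanishes identically for every $Y \in \gr{g}$, so differentiating at $t=0$ and using $\frac{\dd}{\dd t}\big|_{t = 0}\Ad(\e^{tX}) = \ad(X)$ yields
\begin{align*}
  \tr\{\ad(X) \cdot \ad(Y)\} &= 0, \quad \forall Y \in \gr{g},
\end{align*}
i.e.~the Killing form vanishes on $\{X\} \times \gr{g}$. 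Cartan's criterion for semisimplicity then forces $X = 0$, so $\gr{h} = \{0\}$, $\mathcal{C}_W$ is discrete, and compactness of $G$ delivers finiteness.

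I expect the main obstacle to be precisely that last derivative-at-zero step: recognising that differentiating the trace identity extracts exactly the Killing form is the one non-mechanical move. Everything else is essentially bookkeeping using the $\Ad/\ad$ identities already established in Section~\ref{sec:stability}.
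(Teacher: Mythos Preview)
Your proposal is correct and follows essentially the same route as the paper: items (1)--(3) are handled via the same trace identities (cyclicity plus~\eqref{eq:adAdxX}, conjugation by $y$, and the $\Ad$-invariant inner product with skew-symmetry of $\ad$), and the finiteness claim is proved exactly as in the paper by differentiating $t \mapsto \tr\{\Ad(\e^{tX})\cdot\ad(Y)\}$ to extract the Killing form and invoke Cartan's criterion. The only cosmetic difference is that the paper computes $f_Y'(t)$ for all $t$ before evaluating at $t=0$, whereas you differentiate directly at $t=0$; this changes nothing.
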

As we have seen, $e \in \mathcal{C}_W$ and if this set is finite then $e$ is an isolated point, which is a necessary condition for $e$ to be a local attractor. Next we shall focus on proving the latter property without relying on the assumption of $\mathcal{C}_W$ being a group.

\begin{Prop} \label{prop:xZGndeg} If $G$ is semisimple then every $x \in Z(G)$ is a non-degenerate critical point of $V$, and, in particular, an isolated point in $\mathcal{C}_W$.
  \begin{proof} By Corollary~\ref{cor:einEW}, every $x \in Z(G)$ belongs to $\mathcal{C}_W$, hence is a critical point of $V$ by our characterization of the latter set following Corollary~\ref{cor:characCW} (since $(\cv{x_r}, u_1^r, \ldots, u_m^r)$ is regular). As such, we check its non-degeneracy by computing the Hessian matrix of $V$ in convenient coordinates around $x$.

    We denote by $B$ the Killing form of $\gr{g}$. Since $G$ is assumed semisimple, $-B$ is an inner product on $\gr{g}$ and we denote by $Y_1, \ldots, Y_n \in \gr{g}$ an orthonormal basis w.r.t.~it to introduce the so-called coordinates of second kind: let $\varphi:\R^n \rarr G$ be defined by
    \begin{align*}
      \varphi(s_1, \ldots, s_n) &\dfn x \cdot \e^{s_1Y_1} \cdot \ldots \cdot \e^{s_nY_n}, \quad (s_1, \ldots, s_n) \in \R^n.
    \end{align*}
    Simple computations show that
    \begin{align*}
      \dd \varphi \left( \left. \frac{\del}{\del s_j} \right|_{s = 0} \right) &= Y_j(x), \quad \forall j \in \{1, \ldots, n\},
    \end{align*}
    hence $\varphi$ is a local diffeomorphism near $s = 0$. Moreover
    \begin{align*}
      V(\varphi(s_1, \ldots, s_n)) &= \tr \left\{\Ad\left(x \cdot \e^{s_1Y_1} \cdot \ldots \cdot \e^{s_nY_n}\right)\right\}\\
      &= \tr \left\{\Ad(x) \cdot \Ad \left(\e^{s_1Y_1}\right) \cdot \ldots \cdot \Ad\left(\e^{s_nY_n}\right)\right\}\\
      &= \tr \left\{\Ad(x) \cdot \e^{\ad(s_1Y_1)} \cdot \ldots \cdot \e^{\ad(s_nY_n)}\right\}\\
      &= \tr \left\{\e^{s_1\ad(Y_1)} \cdot \ldots \cdot \e^{s_n\ad(Y_n)}\right\}\\
    \end{align*}
    where we used two well-known facts: that $Z(G)$ is precisely the kernel of the $\Ad$ homomorphism; and the identity~\cite[Proposition~1.91]{knapp_lgbi}:
    \begin{align*}
      \Ad (\e^X) &= \e^{\ad(X)}, \quad \forall X \in \gr{g}.
    \end{align*}
    Another simple computation then shows that
    \begin{align*}
      \left. \frac{\del^2 (V \circ \varphi)}{\del s_j \del s_k} \right|_{s = 0} = \tr \left\{\ad(Y_j) \cdot \ad(Y_k)\right\} = B(Y_j,Y_k) = - \delta_{jk}, \quad \forall j,k \in \{1, \ldots, n\}
    \end{align*}
    thus showing that the Hessian matrix of $V$ at $x = \varphi(0)$ is non-degenerate. The last claim follows from Morse Lemma.
  \end{proof}
\end{Prop}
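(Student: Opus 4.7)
The plan is to compute the Hessian of $V$ at $x$ directly in a well-chosen chart, show it is $-I_n$ (hence non-degenerate), and then invoke the Morse Lemma together with the identification $\mathcal{C}_W = \{\text{critical points of } V\}$ coming from~\eqref{eq:EWcomocpV}. That $x$ is a critical point of $V$ to begin with is essentially free: $x \in Z(G) = \ker \Ad$ gives $\Ad(x) = \mathrm{id}_{\gr{g}}$, so by Lemma~\ref{lem:dV} every directional derivative $\dd V_x v = \tr\{\ad(\dd L_{x^{-1}} v)\}$ vanishes, since $\ad(\cdot)$ is traceless.

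For the second-order analysis I would introduce coordinates of second kind. Compactness and semisimplicity make $-B$ (where $B$ is the Killing form) an inner product on $\gr{g}$ by Cartan's criterion; let $Y_1, \ldots, Y_n$ be an orthonormal basis w.r.t.~it and set
\begin{align*}
  \varphi(s_1, \ldots, s_n) \dfn x \cdot \e^{s_1 Y_1} \cdot \ldots \cdot \e^{s_n Y_n}.
\end{align*}
A standard check gives $\dd \varphi_0 (\partial_{s_j}) = Y_j(x)$, so $\varphi$ is a local diffeomorphism near $0$, hence a valid chart in which to read off the Hessian at $x = \varphi(0)$.

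The point of using coordinates of the second kind is that $V \circ \varphi$ telescopes into a product of matrix exponentials: using $\Ad$ being a homomorphism, $\Ad(x) = \mathrm{id}$, and the classical identity $\Ad(\e^Y) = \e^{\ad(Y)}$, we obtain
\begin{align*}
  V(\varphi(s)) = \tr \bigl\{ \e^{s_1 \ad(Y_1)} \cdot \ldots \cdot \e^{s_n \ad(Y_n)} \bigr\}.
\end{align*}
Differentiating twice and then setting $s = 0$ collapses every factor not being differentiated to the identity, so
\begin{align*}
  \left. \frac{\partial^2 (V \circ \varphi)}{\partial s_j \, \partial s_k} \right|_{s = 0} = \tr \{ \ad(Y_j) \cdot \ad(Y_k) \} = B(Y_j, Y_k) = -\delta_{jk},
\end{align*}
so the Hessian is $-I_n$, in particular non-degenerate.

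The Morse Lemma then furnishes a chart in which $V$ is a non-degenerate quadratic form, so $x$ is an isolated critical point of $V$; by~\eqref{eq:EWcomocpV} it is therefore isolated in $\mathcal{C}_W$. I expect no real obstacle here: the only place one might stumble is convincing oneself that the off-diagonal $\partial_{s_j} \partial_{s_k}$ with $j \neq k$ really telescopes to $\tr(\ad(Y_j) \ad(Y_k))$ without any commutator correction from the ordering of the exponentials; but this is automatic because all the non-differentiated factors evaluate to $\mathrm{id}_{\gr{g}}$ at $s = 0$, so no Baker--Campbell--Hausdorff manoeuvre is needed.
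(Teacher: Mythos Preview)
Your proof is correct and follows essentially the same route as the paper: coordinates of the second kind centered at $x$, the reduction of $V\circ\varphi$ to $\tr\{\e^{s_1\ad(Y_1)}\cdots \e^{s_n\ad(Y_n)}\}$ via $\Ad(x)=\mathrm{id}_{\gr g}$ and $\Ad(\e^Y)=\e^{\ad(Y)}$, and the Hessian computation yielding $B(Y_j,Y_k)=-\delta_{jk}$. The only cosmetic difference is that you verify ``$x$ is a critical point of $V$'' directly from Lemma~\ref{lem:dV}, whereas the paper routes it through $Z(G)\subset\mathcal{C}_W$ and~\eqref{eq:EWcomocpV}; both are equivalent.
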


Next we characterize the center of $G$ in terms of $V$ regardless of semisimplicity. Let $\gr{g}_\C$ denote the complexification of $\gr{g}$ and let $\Ad_\C(x): \gr{g}_\C \rarr \gr{g}_\C$ be the complexification of $\Ad(x)$. Therefore
\begin{align*}
  \tr \Ad(x) = \tr \Ad_\C(x) = \sum_{j = 1}^n \lambda_j(x)
\end{align*}
where $\lambda_1(x), \ldots, \lambda_n(x) \in \C$ are the eigenvalues of $\Ad_\C(x)$, repeated according to their multiplicities. Let $\langle \cdot, \cdot \rangle$ be any $\Ad$-invariant inner product on $\gr{g}$ (recall that $G$ is compact) and let $\langle \cdot, \cdot \rangle_\C$ stand for its sesquilinear extension to $\gr{g}_\C$, which is then a Hermitian inner product on $\gr{g}_\C$. Clearly $\Ad_\C(x)$ is unitary w.r.t.~$\langle \cdot, \cdot \rangle_\C$, in particular it is a diagonalizable map and
\begin{align*}
  |\lambda_j(x)| &= 1, \quad \forall j \in \{1, \ldots, n\}.
\end{align*}
It follows then that for every $x \in G$ we have
\begin{align*}
  V(x) \leq |V(x)| = |\tr \Ad_\C(x)| = \left| \sum_{j = 1}^n \lambda_j(x) \right| \leq \sum_{j = 1}^n |\lambda_j(x)| = n.
\end{align*}
Therefore, if $x \in Z(G)$ then $\Ad(x) = \mathrm{id}_{\gr{g}}$ and hence $V(x) = n$, and thus is a global maximum of $V$.

We claim that the converse is also true i.e.~if $V(x) = n$ then $x \in Z(G)$. We must prove that $\lambda_j(x) = 1$ for every $j \in \{1, \ldots, n\}$. Indeed we have
\begin{align*}
  n = \sum_{j = 1}^n \lambda_j(x) = \sum_{j = 1}^n \Re \lambda_j(x) + i\sum_{j = 1}^n \Im \lambda_j(x)
\end{align*}
which implies that
\begin{align*}
  \sum_{j = 1}^n \Re \lambda_j(x) = n, &\quad \sum_{j = 1}^n \Im \lambda_j(x) = 0. 
\end{align*}
Moreover
\begin{align*}
  \Re \lambda_j(x) \leq |\Re \lambda_j(x)| \leq |\lambda_j(x)| = 1, \quad \forall j \in \{1, \ldots, n\},
\end{align*}
so if $\Re \lambda_k(x) < 1$ for some $k \in \{1, \ldots, n\}$ then
\begin{align*}
  \sum_{j = 1}^n \Re \lambda_j(x) = \Re \lambda_k(x) + \sum_{j \neq k} \Re \lambda_j(x) < n
\end{align*}
which would lead us to a contradiction, hence $\Re \lambda_k(x) = 1$ for every $k \in \{1, \ldots, n\}$, and since $|\lambda_k(x)| = 1$ we must also have $\Im \lambda_k(x) = 0$ for every $k \in \{1, \ldots, n\}$. In particular, we have proved:
\begin{Lem} \label{lem:caracZGporV} Any $x \in G$ belongs to $Z(G)$ if and only if $V(x) = \dim \gr{g}$.
\end{Lem}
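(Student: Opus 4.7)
The plan is to analyze the spectrum of $\Ad(x)$ after complexification. I would work with $\Ad_\C(x): \gr{g}_\C \rarr \gr{g}_\C$, noting that since $G$ is compact there exists an $\Ad$-invariant inner product on $\gr{g}$; its sesquilinear extension to $\gr{g}_\C$ makes each $\Ad_\C(x)$ unitary, hence diagonalizable with eigenvalues $\lambda_1(x), \ldots, \lambda_n(x)$ on the unit circle. Since the trace is invariant under complexification, $V(x) = \tr \Ad_\C(x) = \sum_{j=1}^n \lambda_j(x)$, and in particular $|V(x)| \leq n$ with equality if and only if all $\lambda_j(x)$ lie on a common complex ray.

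The ``only if'' direction is immediate: if $x \in Z(G) = \ker \Ad$ then $\Ad(x) = \mathrm{id}_{\gr{g}}$, so each $\lambda_j(x) = 1$ and $V(x) = n$.

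For the converse, suppose $V(x) = n$. Since $V(x) \in \R$, separating real and imaginary parts would give $\sum_j \Re \lambda_j(x) = n$ and $\sum_j \Im \lambda_j(x) = 0$. The bound $\Re \lambda_j(x) \leq |\lambda_j(x)| = 1$ combined with $\sum_j \Re \lambda_j(x) = n$ forces $\Re \lambda_j(x) = 1$ for every $j$, and then $|\lambda_j(x)| = 1$ forces $\Im \lambda_j(x) = 0$, so $\lambda_j(x) = 1$ for all $j$. Here is where diagonalizability is crucial: it would allow me to conclude $\Ad_\C(x) = \mathrm{id}_{\gr{g}_\C}$ from the eigenvalue information, hence $\Ad(x) = \mathrm{id}_\gr{g}$ and $x \in \ker \Ad = Z(G)$.

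The only subtle point, and the step I would be most careful about, is the invocation of diagonalizability to rule out nontrivial Jordan blocks --- a purely trace-based argument would not by itself exclude unipotent contributions. This is why the unitarity of $\Ad_\C(x)$ with respect to the Hermitian extension of an $\Ad$-invariant inner product on $\gr{g}$ is essential; everything else is a routine estimate on unit-modulus complex numbers.
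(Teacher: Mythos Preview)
Your proposal is correct and follows essentially the same argument as the paper: complexify, use unitarity of $\Ad_\C(x)$ with respect to the Hermitian extension of an $\Ad$-invariant inner product to get unit-modulus eigenvalues, then separate real and imaginary parts to force every eigenvalue equal to $1$ when $V(x)=n$. You are in fact slightly more explicit than the paper in flagging that diagonalizability is what passes from ``all eigenvalues equal $1$'' to $\Ad(x)=\mathrm{id}_{\gr g}$, a step the paper leaves implicit.
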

Now we can prove one of the main results of the present work. Recall that $W$ denotes the auxiliary vector fields associated to a \emph{regular} periodic reference trajectory.
\begin{Thm} \label{thm:final} If $G$ is semisimple then every $x \in Z(G)$ is a local attractor of $W$.
  \begin{proof} By Proposition~\ref{prop:xZGndeg} we may find a neighborhood $U \sset G$ of $x$ such that $\overline{U}$ is compact and contains no points in $\mathcal{C}_W$ other than $x$. We may further assume $U$ connected, and define
    \begin{align*}
      M &\dfn \max_{y \in \del U} V(y).
    \end{align*}
    Clearly $M < n$ for $\del U \cap Z(G) = \eset$. Also, since $V(x) = n$ we have by continuity that there exists $U' \sset U$ another neighborhood of $x$ such that $V(y) > M$ for every $y \in U'$.
    
    Now let $(t_0,w_0) \in \R \times U'$ and $\cv{w}:\R \rarr G$ be the unique integral curve of $W$ satisfying $\cv{w}(t_0) = w_0$. Since $V \circ \cv{w}$ is non-decreasing (for $V$ is non-decreasing along $W$, as pointed out earlier) we have
    \begin{align*}
      V(\cv{w}(t)) \geq V(\cv{w}(t_0)) = V(w_0) > M
    \end{align*}
    for every $t \geq t_0$ since $w_0 \in U'$ by hypothesis. Thus $\cv{w}(t) \notin \del U$ and by continuity we have that $\cv{w}(t) \in U$ for every $t \geq t_0$.

    It remains to show that $\cv{w}(t) \to x$ as $t \to \infty$. Indeed, we will show that any sequence $\{t_n\}_{n \in \N}$ increasing to infinity admits a subsequence $\{t_{n_k}\}_{k \in \N}$ such that $\cv{w}(t_{n_k}) \to x$ as $k \to \infty$. We may assume w.l.o.g.~that $t_n \geq t_0$ for every $n \in \N$, hence $\cv{w}(t_n) \in U$. Because $\overline{U}$ is compact there exists $w \in \overline{U}$ and a subsequence $\{t_{n_k}\}_{k \in \N}$ of $\{t_n\}_{n \in \N}$ such that $\cv{w}(t_{n_k}) \to w$ as $k \to \infty$. In particular $w \in \omega_W(t_0, w_0) \sset \mathcal{C}_W$, but as we have seen $\overline{U} \cap \mathcal{C}_W = \{ x \}$.
  \end{proof}
\end{Thm}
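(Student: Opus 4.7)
The plan is to combine the non-degeneracy of $x$ as a critical point of $V$ (Proposition~\ref{prop:xZGndeg}), the fact that $Z(G)$ coincides with the global maximum set of $V$ (Lemma~\ref{lem:caracZGporV}), and the monotonicity of $V$ along integral curves of $W$ (which gives an \emph{invariant} sublevel set near $x$) to confine trajectories to a small neighborhood, and then use Theorem~\ref{thm:akvale0emOmega} to pin their $\omega$-limit sets down to $\{x\}$.

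First I would choose a neighborhood $U$ of $x$ with $\overline{U}$ compact, $U$ connected and such that $\overline{U} \cap \mathcal{C}_W = \{x\}$; this is possible because, under the regularity assumption, $\mathcal{C}_W$ equals the critical set of $V$, and $x$ is a non-degenerate (hence isolated) critical point of $V$ by Proposition~\ref{prop:xZGndeg}. In particular $\partial U$ contains no point of $Z(G)$, so by Lemma~\ref{lem:caracZGporV} the continuous function $V$ satisfies $V(y) < n$ for every $y \in \partial U$. Setting $M \dfn \max_{y \in \partial U} V(y)$ we therefore get $M < n = V(x)$, and by continuity of $V$ we can find a smaller neighborhood $U' \sset U$ of $x$ on which $V > M$.

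Next I would show that $U'$ is ``forward-trapping'' inside $U$. Fix $(t_0, w_0) \in \R \times U'$ and let $\cv{w}$ be the unique integral curve of $W$ with $\cv{w}(t_0) = w_0$. Since $\dd V(W(t, \cdot)) = \sum_k a_k^2 \geq 0$, the function $V$ is non-decreasing along $W$, so $V(\cv{w}(t)) \geq V(w_0) > M$ for every $t \geq t_0$; in particular $\cv{w}(t)$ can never reach $\partial U$, so by connectedness of the trajectory and $\cv{w}(t_0) \in U$ we conclude $\cv{w}(t) \in U$ for all $t \geq t_0$.

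Finally I would upgrade this confinement to convergence. Given any sequence $t_n \to \infty$ (which w.l.o.g.~satisfies $t_n \geq t_0$), the points $\cv{w}(t_n) \in \overline{U}$ admit, by compactness, a subsequence $\cv{w}(t_{n_k}) \to w \in \overline{U}$. Such a $w$ lies in $\omega_W(t_0, w_0)$, and by Theorem~\ref{thm:akvale0emOmega} every $\omega$-limit point of $W$ is in $\mathcal{C}_W$; since $\overline{U} \cap \mathcal{C}_W = \{x\}$ we must have $w = x$. As every subsequence has a subsubsequence converging to the same limit $x$, we conclude $\cv{w}(t) \to x$, proving that $x$ is a local attractor. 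The step that has to be set up most carefully is the very first one -- choosing $U$ so that $\partial U$ is disjoint from $Z(G)$ and $\mathcal{C}_W \cap \overline{U} = \{x\}$ -- because the whole argument hinges on having both a strict drop $M < n$ across $\partial U$ and a unique critical point in the closure.
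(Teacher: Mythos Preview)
Your proposal is correct and follows essentially the same argument as the paper's proof: isolate $x$ in $\mathcal{C}_W$ via Proposition~\ref{prop:xZGndeg}, use $M = \max_{\partial U} V < n$ together with the monotonicity of $V$ along $W$ to trap trajectories in $U$, and then identify every $\omega$-limit point with $x$ via $\overline{U}\cap\mathcal{C}_W=\{x\}$. Your write-up is in fact a bit more explicit than the paper's in citing Lemma~\ref{lem:caracZGporV} for $M<n$ and Theorem~\ref{thm:akvale0emOmega} for $\omega_W(t_0,w_0)\subset\mathcal{C}_W$, but the logic is the same step for step.
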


In particular $e$ is a local attractor of $W$, hence Proposition~\ref{prop:elocalattrac} solves the PTTP locally.

\section{Existence of regular periodic trajectories: sufficient conditions} \label{sec:existence}

Recall that we are always assuming that our left-invariant system~\eqref{eq:lids} is bracket-generating i.e.~the Lie algebra spanned by $X_1, \ldots, X_m$ is $\gr{g}$. In this section, we will prove that if moreover
\begin{align}
  \Span \{ \ad(X_1)^n X_k \st 1 \leq k \leq m, \ n \in \N \} &= \gr{g} \label{eq:reg_sys}
\end{align}
then given any $x_\infty \in G$ and $T > 0$ there exists $(\cv{x}_r, u_1^r, \ldots, u_m^r)$ a smooth $T$-periodic trajectory of~\eqref{eq:lids} satisfying $\cv{x}_r(0) = x_\infty$ and which is regular in the sense of Definition~\ref{def:traj_regular}.

For $j \in \{1, \ldots, m\}$ and $p \in \Z$ let
\begin{align*}
  J_j^p \dfn \left( \frac{(j - 1)T}{m} + pT, \frac{jT}{m} + pT\right) = J_j^0 + pT.
\end{align*}
Clearly $J_j^p \sset (pT, (p+1)T) = (0,T) + pT$ and
\begin{align}
  J_j^p \cap J_k^q \neq \eset &\Longleftrightarrow j = k, \ p = q. \label{eq:Jjdisj}
\end{align}
Take $\chi_j \in C_c^\infty(J_j^0)$ equal to $1$ in some open interval $I_j \sset J_j^0$ and with zero integral, and let $u_j^r:\R \rarr \R$ be the unique $T$-periodic function which is equal to $\chi_j$ on $[0,T]$: this is clearly smooth.

Since $\supp \chi_j \sset J_j^0$ it is easy to check that
\begin{align}
  \supp u_j^r \sset \bigcup_{p \in \Z} J_j^p \label{eq:suppujr}
\end{align}
which, together with~\eqref{eq:Jjdisj}, easily ensures the following:
\begin{Prop} \label{prop:sohumnaozero} Given $t \in \R$, if $u_j^r(t) \neq 0$ for some $j \in \{1, \ldots, m\}$ then $u_k^r(t) = 0$ for every $k \neq j$.
\end{Prop}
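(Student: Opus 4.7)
The plan is a short direct argument combining the support inclusion~\eqref{eq:suppujr} with the disjointness property~\eqref{eq:Jjdisj}. Fix $t \in \R$ and suppose $u_j^r(t) \neq 0$ for some $j \in \{1,\ldots,m\}$. Then $t$ belongs to the set $\{s \in \R : u_j^r(s) \neq 0\}$, which is contained in $\supp u_j^r$, and by~\eqref{eq:suppujr} there exists $p \in \Z$ with $t \in J_j^p$.

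Now suppose, toward a contradiction, that $u_k^r(t) \neq 0$ for some $k \neq j$. Applying the same reasoning to the index $k$ we find $q \in \Z$ with $t \in J_k^q$. Hence $t \in J_j^p \cap J_k^q$, so this intersection is nonempty, and~\eqref{eq:Jjdisj} forces $j = k$ (and $p = q$), contradicting $k \neq j$. Therefore $u_k^r(t) = 0$ for every $k \neq j$, which is exactly the statement.

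There is essentially no obstacle: the content of the proposition has been packaged into the two properties~\eqref{eq:Jjdisj} and~\eqref{eq:suppujr} established just before the statement. The only mild subtlety is ensuring that $\{u_j^r \neq 0\} \subset \bigcup_{p \in \Z} J_j^p$, but this is immediate from $\{u_j^r \neq 0\} \subset \supp u_j^r$ together with~\eqref{eq:suppujr}.
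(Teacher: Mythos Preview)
Your proof is correct and is exactly the argument the paper intends: the paper does not spell out a proof but simply remarks that the proposition follows from~\eqref{eq:Jjdisj} together with~\eqref{eq:suppujr}, which is precisely the direct argument you have written out.
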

Thus on $I_j^p \dfn I_j + pT$ we have $u_j^r = 1$, while $u_k^r = 0$ for $k \neq j$, identically. Next, define $\xi_j:\R \rarr \R$ by 
\begin{align*}
  \xi_j(t) &\dfn \int_0^t u_j^r(s) \dd s
\end{align*}
which is obviously smooth and also $T$-periodic since $u_j^r$ is $T$-periodic and its integral over $[0,T]$ is zero. Moreover, one may check that
\begin{align}
  \supp \xi_j \sset \bigcup_{p \in \Z} J_j^p. \label{eq:suppxij}
\end{align}

We finally define $\cv{x}_r:\R \rarr G$ by
\begin{align*}
  \cv{x}_r(t) &\dfn 
  \begin{cases}
    \e^{\xi_j(t)X_j}(x_\infty), & \text{if $t \in \bigcup_{p \in \Z} J_j^p$, for $j = 1, \ldots, m$}; \\
    x_\infty, & \text{otherwise}.
  \end{cases}
\end{align*}
This is well-defined thanks to~\eqref{eq:Jjdisj}, and moreover smooth by~\eqref{eq:suppxij}. Also, on $I_j^p$ we have
\begin{align*}
  \cv{x}_r'(t) = \frac{\dd}{\dd t}\e^{\xi_j(t)X_j}(x_\infty) = \xi_j'(t) X_j(\e^{\xi_j(t)X_j}(x_\infty)) = u_j^r(t)X_j(\cv{x}_r(t)) = \sum_{k = 1}^m u_k^r(t)X_k(\cv{x}_r(t))
\end{align*}
where the last identity follows from Proposition~\ref{prop:sohumnaozero}. On the other hand, if $t \notin J_j^p$ for any $j \in \{ 1, \ldots, m\}$ and $p \in \Z$ then near $t$ we have $\cv{x}_r = x_\infty$ identically, hence $\cv{x}_r'(t) = 0$, which also agrees with~\eqref{eq:lids} thanks to~\eqref{eq:suppujr}: we have proved that $(\cv{x}_r, u_1^r, \ldots, u_m^r)$ is a trajectory of~\eqref{eq:lids}, which is $T$-periodic by construction.

\begin{Thm} If~\eqref{eq:reg_sys} holds then the trajectory $(\cv{x}_r, u_1^r, \ldots, u_m^r)$ above is regular.
  \begin{proof} We denote by
    \begin{align*}
      \mathcal{V} \dfn \Span \left\{\Ad\left(\cv{x}_r(t)\right)X_k \st t \in \R, \ 1 \leq k \leq m\right\}
    \end{align*}
    which we must prove that is equal to $\gr{g}$. For each $k \in \{1, \ldots, m\}$ let $\Lambda^0_k \dfn X_k$ and $\lambda_k:\R \rarr \gr{g}$ be defined by $\lambda_k \dfn \Ad(\cv{x}_r)\Lambda^0_k$: this is a smooth curve that lies in $\mathcal{V}$, and since the latter is a linear subspace of $\gr{g}$ the same is true for all of its derivatives. By Lemma~\ref{lem:derlambda} we have, for every $n \in \N$,
    \begin{align*}
      \lambda_k^{(n)} &= \Ad(\cv{x}_r)\Lambda_k^n\\
      \Lambda_k^{n + 1} &\dfn \left(\Lambda_k^n\right)' + \ad(X_r)\Lambda_k^n
    \end{align*}
    where $X_r$ is given by~\eqref{eq:Xr}.

    We need the following technical lemma, which does not depend on the construction of $(\cv{x}_r, u_1^r, \ldots, u_m^r)$.
    \begin{Lem} \label{lem:tech_final} For each $n \in \N$ we may write
      \begin{align*}
        \Lambda_k^n &= \Delta_nX_k + \ad(X_r)^nX_k
      \end{align*}
      where $\Delta_0 = \Delta_1 = 0$ and, for $n \geq 2$, $\Delta_n$ is a sum of products enjoying the following property: in each summand there is at least one factor that is a derivative of some order of $\ad(X_r)$. 
      \begin{proof}[Proof of Lemma~\ref{lem:tech_final}] By induction on $n$. We start calculating recursively by Lemma~\ref{lem:derlambda}
        \begin{align*}
          \Lambda_k^0 &= X_k\\
          \Lambda_k^1 &= \ad(X_r)X_k\\
          \Lambda_k^2 &= \ad(X_r)'X_k + \ad(X_r)^2X_k
        \end{align*}
        from which we identify $\Delta_0 = 0$, $\Delta_1 = 0$ and $\Delta_2 = \ad(X_r')$, thus proving our claim for $n = 0,1,2$, which we use as basis for induction. Assuming our conclusion for some $n \geq 2$ we have
        \begin{align*}
          \Lambda_k^{n + 1} &= \left(\Lambda_k^n\right)' + \ad(X_r)\Lambda_k^n\\
          &= \left(\Delta_nX_k + \ad(X_r)^nX_k\right)' + \ad(X_r)\left(\Delta_nX_k + \ad(X_r)^nX_k\right)\\
          &= \left(\Delta_n\right)'X_k + \left(\ad(X_r)^n\right)'X_k + \ad(X_r)\Delta_nX_k + \ad(X_r)^{n + 1}X_k\\
          &= \Delta_{n + 1}X_k + \ad(X_r)^{n + 1}X_k
        \end{align*}
        where obviously
        \begin{align*}
          \Delta_{n + 1} &\dfn \left(\Delta_n\right)' + \left(\ad(X_r)^n\right)' + \ad(X_r)\Delta_n.
        \end{align*}
        Since $\Delta_n$ is a sum of products in which each summand there is at least one factor that is a derivative of some order of $\ad(X_r)$ then of course the same property holds true for both $\left(\Delta_n\right)'$ and $\ad(X_r)\Delta_n$. Moreover
        \begin{align*}
          \left(\ad(X_r)^n\right)' &= \sum_{p = 1}^n \ad(X_r)^{p - 1} \cdot \ad(X_r)' \cdot \ad(X_r)^{n - p}
        \end{align*}
        also enjoys the aforementioned property, hence so does $\Delta_{n + 1}$.
      \end{proof}
    \end{Lem}
    
    Now, if $t_1 \in I_1^0$ then $u_1^r(t_1) = 1$, while for $j \neq 1$ we have by Proposition~\ref{prop:sohumnaozero} that $u_j^r$ vanishes identically near $t_1$, and therefore
    \begin{align*}
      X_r(t_1) = \sum_{j = 1}^m u_j^r(t_1) X_j = X_1
    \end{align*}
    while 
    \begin{align*}
      X_r^{(n)}(t_1) &= 0, \quad \forall n \geq 1.
    \end{align*}
    It follows from Lemma~\ref{lem:tech_final} that $\Delta_n(t_1) = 0$ (for $\ad:\gr{g} \rarr \gr{g}$ is linear, hence $\ad(X_r)^{(k)} = \ad \left(X_r^{(k)} \right)$ for every $k \in \N$) for every $n \in \N$, so
    \begin{align*}
      \Lambda_k^n(t_1) = \Delta_n(t_1)X_k + \ad(X_r(t_1))^nX_k = \ad(X_1)^nX_k
    \end{align*}
    for every $n \in \N$. We conclude that
    \begin{align*}
      \lambda_k^{(n)}(t_1) = \Ad(\cv{x}_r(t_1)) \Lambda_k^n(t_1) = \Ad(\cv{x}_r(t_1)) \ad(X_1)^nX_k
    \end{align*}
    belongs to $\mathcal{V}$ for every $n \in \N$ and $k \in \{1, \ldots, m\}$; in other words, if we denote
    \begin{align*}
      \mathcal{W} &\dfn \Span \{\ad(X_1)^nX_k \st 1 \leq k \leq m, \ n \in \N\}
    \end{align*}
    then $\Ad(\cv{x}_r(t_1)) \mathcal{W} \sset \mathcal{V}$. But we assumed in~\eqref{eq:reg_sys} that $\mathcal{W} = \gr{g}$ and $\Ad(\cv{x}_r(t_1))$ is invertible, hence also $\mathcal{V} = \gr{g}$ i.e.~$(\cv{x}_r, u_1^r, \ldots, u_m^r)$ is regular.
  \end{proof}
\end{Thm}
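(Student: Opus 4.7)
The plan is to exploit that $\VV \dfn \Span \{\Ad(\cv{x}_r(t))X_k \st t \in \R, \ 1 \leq k \leq m\}$ is a \emph{linear} subspace of $\gr{g}$, hence it is closed under taking derivatives of any smooth curve $t \mapsto \lambda(t) \in \gr{g}$ that lies in it. Setting $\lambda_k \dfn \Ad(\cv{x}_r) X_k$, Lemma~\ref{lem:derlambda} tells us $\lambda_k^{(n)} = \Ad(\cv{x}_r) \Lambda_k^n$ with $\Lambda_k^0 = X_k$ and $\Lambda_k^{n+1} = (\Lambda_k^n)' + \ad(X_r)\Lambda_k^n$. So every $\Ad(\cv{x}_r(t))\Lambda_k^n(t)$ belongs to $\VV$. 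The aim is to pick a well-chosen $t = t_1$ at which $\Lambda_k^n(t_1)$ reduces to $\ad(X_1)^n X_k$, after which invertibility of $\Ad(\cv{x}_r(t_1))$ together with~\eqref{eq:reg_sys} yields $\VV = \gr{g}$.

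The central technical step will be a structural lemma showing, by induction on $n$, that
\begin{align*}
  \Lambda_k^n &= \Delta_n X_k + \ad(X_r)^n X_k,
\end{align*}
where $\Delta_n$ is a sum of products of operators, and every summand contains at least one factor of the form $\ad(X_r)^{(j)}$ with $j \geq 1$. The base cases $n = 0, 1, 2$ are direct computations (with $\Delta_0 = \Delta_1 = 0$ and $\Delta_2 = \ad(X_r')$). For the inductive step, the recursion
\begin{align*}
  \Lambda_k^{n+1} &= (\Delta_n)' X_k + \bigl(\ad(X_r)^n\bigr)' X_k + \ad(X_r) \Delta_n X_k + \ad(X_r)^{n+1} X_k
\end{align*}
identifies $\Delta_{n+1}$ as the sum of the first three operators; the first and third inherit the property from $\Delta_n$, while the Leibniz expansion of $(\ad(X_r)^n)'$ produces a sum in which each summand has exactly one factor $\ad(X_r)' = \ad(X_r')$.

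The final step uses the construction of the reference trajectory directly. Pick $t_1$ in the open interval $I_1^0$ on which $u_1^r \equiv 1$, so by Proposition~\ref{prop:sohumnaozero} all other $u_j^r$ vanish on a neighborhood of $t_1$; hence $X_r \equiv X_1$ near $t_1$, and in particular every derivative $X_r^{(j)}(t_1) = 0$ for $j \geq 1$. Since $\ad$ is linear, $\ad(X_r)^{(j)}(t_1) = \ad(X_r^{(j)}(t_1)) = 0$ for $j \geq 1$, so every summand of $\Delta_n(t_1)$ vanishes. Therefore
\begin{align*}
  \Lambda_k^n(t_1) &= \ad(X_1)^n X_k,
\end{align*}
which yields $\Ad(\cv{x}_r(t_1)) \ad(X_1)^n X_k \in \VV$ for every $n \in \N$, $k \in \{1,\ldots,m\}$. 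Assumption~\eqref{eq:reg_sys} says these vectors span $\gr{g}$, and $\Ad(\cv{x}_r(t_1))$ is a linear isomorphism, so $\VV = \gr{g}$ and the trajectory is regular.

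The main obstacle is keeping track of the structural claim about $\Delta_n$ cleanly: one must verify that left-multiplying by $\ad(X_r)$, differentiating once, and applying Leibniz to $(\ad(X_r)^n)'$ all preserve the property ``every summand contains at least one positive-order derivative of $\ad(X_r)$''. Once that bookkeeping is in place the evaluation at $t_1$ kills $\Delta_n(t_1)$ for free, and the rest is linear algebra.
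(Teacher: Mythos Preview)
Your proposal is correct and follows essentially the same approach as the paper's proof: the same structural lemma $\Lambda_k^n = \Delta_n X_k + \ad(X_r)^n X_k$ proved by the same induction, the same choice of $t_1 \in I_1^0$ to kill $\Delta_n(t_1)$, and the same linear-algebra conclusion via invertibility of $\Ad(\cv{x}_r(t_1))$ and hypothesis~\eqref{eq:reg_sys}. The only difference is presentational.
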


\def\cprime{$'$}

\end{document}